\documentclass[final,3p]{elsarticle}
\usepackage{lineno,hyperref}
\usepackage{amsmath}
\usepackage{amssymb}
\usepackage[linesnumbered,ruled,vlined]{algorithm2e}
\usepackage{caption}
\usepackage{mathtools}
\usepackage{changes}
\usepackage{multirow}
\usepackage{verbatim}
\usepackage{mathrsfs}
\usepackage{graphicx}
\usepackage{subcaption}
\usepackage{amsmath,amsthm,bm,mathrsfs}
\theoremstyle{plain}
\newtheorem{theorem}{Theorem}[section]

\newtheorem{proposition}[theorem]{Proposition}

\theoremstyle{definition}

\theoremstyle{remark}

\modulolinenumbers[5]

\journal{ArXiv.org}

\begin{document}

\begin{frontmatter}

\title{A New Generalized Low-Rank Cholesky Factor ADI Algorithm for Large-Scale Stein Equations}

\author[uz]{Umair~Zulfiqar\corref{mycorrespondingauthor}}
\cortext[mycorrespondingauthor]{Corresponding author}
\ead{umair@yangtzeu.edu.cn}
\address[uz]{School of Electronic Information and Electrical Engineering, Yangtze University, Jingzhou, Hubei, 434023, China}
\begin{abstract}
The low-rank alternating direction implicit (ADI) method is an efficient solver for large-scale Stein equations with low-rank solutions. This paper shows that, as in the continuous-time Lyapunov equation case, the low-rank Cholesky factor ADI (LRCF-ADI) method for Stein equations implicitly performs $\mathcal{H}_2$-pseudo-optimal model order reduction for discrete-time systems. This observation leads to an automatic shift-generation strategy, allowing LRCF-ADI to select subsequent shifts without user intervention.

The standard LRCF-ADI method requires shifts outside the unit circle. We generalize the method to allow shifts anywhere in the complex plane, including on the unit circle. This extension enables numerical integration for frequency-limited Stein equations by interpolating the integrand at points on the unit circle. It also enables non-intrusive, data-driven balanced truncation and frequency-limited balanced truncation using experimentally measurable transfer function samples on the unit circle, without requiring access to a state-space realization. Numerical results for large-scale models demonstrate the effectiveness of the proposed methods as low-rank Stein equation solvers and data-driven model order reduction methods.
\end{abstract}

\begin{keyword}
ADI\sep Cholesky factor\sep Low-rank\sep Projection\sep Pole placement\sep Rational interpolation\sep Stein equation
\end{keyword}
\end{frontmatter}

\section{Introduction}
Consider an $n$th-order discrete-time linear time-invariant (LTI) system $G(z)$ with the state-space realization
\begin{align}
G(z)=C(zE-A)^{-1}B,\nonumber
\end{align}
where $E\in\mathbb{R}^{n\times n}$, $A\in\mathbb{R}^{n\times n}$, $B\in\mathbb{R}^{n\times m}$, $C\in\mathbb{R}^{p\times n}$, and $z=e^{j\omega}$. Throughout this paper, $E$ is assumed to be invertible, and all eigenvalues of $E^{-1}A$ are assumed to lie inside the unit circle.

The controllability Gramian $P$ and observability Gramian $Q$ associated with the realization $(E,A,B,C)$ satisfy the Stein equations
\begin{align}
APA^\top - EPE^\top + BB^\top &= 0,\label{lyap_p}\\
A^\top Q A - E^\top Q E + C^\top C &= 0.\label{lyap_q}
\end{align}
When $n$ is large and $m\ll n$, $p\ll n$, the Gramians $P$ and $Q$ are often numerically low-rank, enabling their efficient computation in large-scale settings.

Balanced truncation (BT) \cite{moore1981principal} is one of the most widely used model order reduction (MOR) techniques and can produce accurate reduced-order models (ROMs). The balancing square-root algorithm (BSA) \cite{tombs1987truncated} is a numerically stable algorithm for BT. It requires the Gramians $P$ and $Q$ in square-root form, similar to a Cholesky factorization:
\[
P=Z_pZ_p^\top,\qquad Q=Z_qZ_q^\top.
\]
This motivates computing low-rank solutions of the Stein equations \eqref{lyap_p} and \eqref{lyap_q} in Cholesky-like factored form.
\subsection{Discrete-Time Pseudo-Optimal Rational Krylov Algorithm (DT-PORK)}
Define $V_{\mathrm{kry}}$ as
\begin{align}
V_{\mathrm{kry}}=
\begin{bmatrix}
(\sigma_1 E-A)^{-1}B & \cdots & (\sigma_k E-A)^{-1}B
\end{bmatrix}.
\end{align}
Also, define
\begin{align}
S_{\mathrm{kry}}=\mathrm{diag}(\sigma_1,\ldots,\sigma_k)\otimes I_m\quad \text{and}\quad L_{\mathrm{kry}}=\begin{bmatrix}1&\cdots&1\end{bmatrix}\otimes I_m.
\end{align}
Then, $V_{\mathrm{kry}}$ satisfies the Sylvester equation
\begin{align}
AV_{\mathrm{kry}}-EV_{\mathrm{kry}}S_{\mathrm{kry}}+BL_{\mathrm{kry}}=0.
\end{align}

Consider an $r=km$ order approximation of $G(z)$ given by
\begin{align}
\tilde{G}(z)=\tilde{C}(zI-\tilde{A})^{-1}\tilde{B},\nonumber
\end{align}
where $\tilde{A}\in\mathbb{R}^{r\times r}$, $\tilde{B}\in\mathbb{R}^{r\times m}$, and $\tilde{C}\in\mathbb{R}^{p\times r}$. Set
\begin{align}
\tilde{A}=S_{\mathrm{kry}}-\tilde{B}L_{\mathrm{kry}}, \qquad
\tilde{C}=CV_{\mathrm{kry}}.
\end{align}
Assuming that $(S_{\mathrm{kry}},L_{\mathrm{kry}})$ is observable, the following interpolation condition holds for any choice of $\tilde{B}$:
\begin{align}
G(\sigma_i)=\tilde{G}(\sigma_i), \qquad i=1,\ldots,k;
\end{align}cf. \cite{astolfi2010model,wolf2014h,ahmad2011krylov}.

Let $\tilde{\lambda}_i$ denote the poles of $\tilde{G}(z)$. The interpolation condition
\begin{align}
G\Big(\frac{1}{\overline{\tilde{\lambda}}_i}\Big)
=
\tilde{G}\Big(\frac{1}{\overline{\tilde{\lambda}}_i}\Big)
\label{opt_int}
\end{align}
is a subset of the optimality conditions for the $\mathcal{H}_2$-optimal MOR problem \cite{bunse2010h2}, namely,
\begin{align}
\frac{\partial}{\partial \tilde{C}}
\big(\|G(z)-\tilde{G}(z)\|_{\mathcal{H}_2}^2\big)=0.
\label{opt_cond}
\end{align}
In \cite{zulfiqar2025compression}, a choice of the free parameter $\tilde{B}$ is given for interpolation points $\sigma_i$ outside the unit circle. This choice places the poles of $\tilde{A}$ at $1/\overline{\sigma}_i$, thereby enforcing \eqref{opt_int} and \eqref{opt_cond}. Since $\sigma_i$ lie outside the unit circle, the poles of $\tilde{A}$ lie inside the unit circle, and $\tilde{G}(z)$ is stable.

Let $Q_s$ solve the Stein equation
\begin{align}
S_{\mathrm{kry}}^*Q_sS_{\mathrm{kry}}-Q_s-L_{\mathrm{kry}}^\top L_{\mathrm{kry}}=0.
\end{align}
It is shown in \cite{zulfiqar2025compression} that setting
\[
\tilde{B}=Q_s^{-1}S_{\mathrm{kry}}^{-*}L_{\mathrm{kry}}^\top
\]
enforces \eqref{opt_int} and \eqref{opt_cond}, and yields
\[
\tilde{A}
=
S_{\mathrm{kry}}-Q_s^{-1}S_{\mathrm{kry}}^{-*}L_{\mathrm{kry}}^\top L_{\mathrm{kry}}
=
Q_s^{-1}S_{\mathrm{kry}}^{-*}Q_s.
\]
Furthermore, $\tilde{P}=Q_s^{-1}$ is the controllability Gramian of $(\tilde{A},\tilde{B})$ and satisfies
\begin{align}
\tilde{A}\tilde{P}\tilde{A}^*-\tilde{P}+\tilde{B}\tilde{B}^*=0.
\end{align}
The Petrov--Galerkin approximation
\[
P\approx V_{\mathrm{kry}}Q_s^{-1}V_{\mathrm{kry}}^*
\]
converges monotonically to $P$ as the number of interpolation points $k$ increases.
\subsection{Factored ADI (fADI) Method for Sylvester Equations \cite{tu2009adi,benner2014computing}}
Consider the Sylvester equation
\begin{align}
AX_{\mathrm{sylv}}\hat{E}+EX_{\mathrm{sylv}}\hat{A}+B\hat{C}=0,
\label{sylv}
\end{align}
where $X_{\mathrm{sylv}}\in\mathbb{R}^{n\times v}$, $\hat{E}\in\mathbb{R}^{v\times v}$, $\hat{A}\in\mathbb{R}^{v\times v}$, and $\hat{C}\in\mathbb{R}^{m\times v}$.

Let $\{\alpha_i\}_{i=1}^{k}\subset\mathbb{C}$ and $\{\beta_i\}_{i=1}^{k}\subset\mathbb{C}$ be ADI shifts satisfying $\alpha_i\neq-\beta_i$. The fADI method \cite{tu2009adi,benner2014computing} defines
\begin{align}
v_i^{\mathrm{fadi}}&=(A+\alpha_i E)^{-1}B_{\perp,i-1}^{\mathrm{fadi}},\nonumber\\
w_i^{\mathrm{fadi}}&=(\hat{A}^\top+\overline{\beta}_i\hat{E}^\top)^{-1}
(C_{\perp,i-1}^{\mathrm{fadi}})^*,
\end{align}
where
\begin{align}
B_{\perp,i}^{\mathrm{fadi}}
&=B_{\perp,i-1}^{\mathrm{fadi}}
-(\alpha_i+\beta_i)Ev_i^{\mathrm{fadi}},\nonumber\\
C_{\perp,i}^{\mathrm{fadi}}
&=C_{\perp,i-1}^{\mathrm{fadi}}
-(\alpha_i+\beta_i)(w_i^{\mathrm{fadi}})^*\hat{E},
\end{align}
with $B_{\perp,0}^{\mathrm{fadi}}=B$ and
$C_{\perp,0}^{\mathrm{fadi}}=\hat{C}$.

Define
\begin{align}
V_{\mathrm{fadi}}^{(k)}
&=\begin{bmatrix}v_1^{\mathrm{fadi}}&\cdots&v_k^{\mathrm{fadi}}\end{bmatrix},\nonumber\\
W_{\mathrm{fadi}}^{(k)}
&=\begin{bmatrix}w_1^{\mathrm{fadi}}&\cdots&w_k^{\mathrm{fadi}}\end{bmatrix},\nonumber\\
X_{\mathrm{fadi}}^{(k)}
&=
\begin{bmatrix}
-(\alpha_1+\beta_1)&\cdots&0\\
\vdots&\ddots&\vdots\\
0&\cdots&-(\alpha_k+\beta_k)
\end{bmatrix}\otimes I_m.
\end{align}
Then, the fADI approximation
\[
X_{\mathrm{sylv}}\approx\tilde{X}_{\mathrm{sylv}}
=
V_{\mathrm{fadi}}^{(k)}
X_{\mathrm{fadi}}^{(k)}
(W_{\mathrm{fadi}}^{(k)})^*
\]
satisfies the residual relation
\[
A\tilde{X}_{\mathrm{sylv}}\hat{E}
+E\tilde{X}_{\mathrm{sylv}}\hat{A}
+B\hat{C}
=
B_{\perp,k}^{\mathrm{fadi}}C_{\perp,k}^{\mathrm{fadi}}.
\]

The low-rank solution of the Stein equation \eqref{lyap_p} can be obtained by setting
\[
\hat{E}=A^\top,\qquad
\hat{A}=-E^\top,\qquad
\hat{C}=B^\top.
\]
With these choices,
\begin{align}
w_i^{\mathrm{fadi}}
&=(-E+\overline{\beta}_iA)^{-1}
(C_{\perp,i-1}^{\mathrm{fadi}})^* \nonumber\\
&=\frac{1}{\overline{\beta}_i}
\left(A-\frac{1}{\overline{\beta}_i}E\right)^{-1}
(C_{\perp,i-1}^{\mathrm{fadi}})^*.
\end{align}
Since $C_{\perp,0}^{\mathrm{fadi}}=B^\top$, choosing
\[
\alpha_i=-\frac{1}{\overline{\beta}_i}
\]
gives
\[
w_1^{\mathrm{fadi}}=-\alpha_1v_1^{\mathrm{fadi}}.
\]

Define
\[
D_{\mathrm{fadi}}^{(k)}
=
\operatorname{diag}\left(
-\overline{\alpha}_1,\ldots,
-\overline{\alpha}_k\prod_{j=1}^{k-1}|\alpha_j|^2
\right)\otimes I_m.
\]
Then,
\[
W_{\mathrm{fadi}}^{(k)}
=
V_{\mathrm{fadi}}^{(k)}(D_{\mathrm{fadi}}^{(k)})^*,
\]
and the fADI approximation of $P$ becomes
\[
P\approx
V_{\mathrm{fadi}}^{(k)}
X_{\mathrm{fadi}}^{(k)}
D_{\mathrm{fadi}}^{(k)}
(V_{\mathrm{fadi}}^{(k)})^*.
\]
Moreover,
\[
X_{\mathrm{fadi}}^{(k)}D_{\mathrm{fadi}}^{(k)}
=
\operatorname{diag}\left(
|\alpha_1|^2-1,\ldots,
(|\alpha_k|^2-1)\prod_{j=1}^{k-1}|\alpha_j|^2
\right)\otimes I_m.
\]

If the shifts $\alpha_i$ lie outside the unit circle, i.e.,
$|\alpha_i|>1$, the approximation can be written as
\[
P\approx
\left(V_{\mathrm{fadi}}^{(k)}Z_{\mathrm{fadi}}^{(k)}\right)
\left(V_{\mathrm{fadi}}^{(k)}Z_{\mathrm{fadi}}^{(k)}\right)^*,
\]
where
\[
Z_{\mathrm{fadi}}^{(k)}
=
\begin{bmatrix}
\sqrt{|\alpha_1|^2-1}&\cdots&0\\
\vdots&\ddots&\vdots\\
0&\cdots&
\sqrt{(|\alpha_k|^2-1)\prod_{j=1}^{k-1}|\alpha_j|^2}
\end{bmatrix}\otimes I_m.
\]
This approach is known as the low-rank Cholesky-factor ADI (LRCF-ADI) method for Stein equations. A similar method is also presented in \cite{benner2011numerical}.
\section{Main Work}
This section first shows that, similar to the continuous-time LRCF-ADI method for Lyapunov equations \cite{benner2013efficient,benner2013reformulated}, the LRCF-ADI method for Stein equations \cite{benner2014computing} implicitly performs $\mathcal{H}_2$-pseudo-optimal MOR; see \cite{wolf2016adi} for the continuous-time case. Next, an effective shift-generation method is introduced, making the LRCF-ADI method for Stein equations fully automatic and eliminating the need for user intervention. A generalized LRCF-ADI method for Stein equations is then proposed, allowing the shifts $\alpha_i$ to be located anywhere in the complex plane, including on and inside the unit circle. Finally, applications in which shifts on the unit circle are more practical are discussed, for which the generalized LRCF-ADI method is more suitable.
\subsection{The LRCF-ADI Method for Stein Equations Implicitly Performs $\mathcal{H}_2$-Pseudo-Optimal MOR}
Define the ROM
\[
\tilde{G}^{(k)}(z)
=
\tilde{C}^{(k)}
\big(zI-\tilde{A}^{(k)}\big)^{-1}
\tilde{B}^{(k)}.
\]
In \cite{zulfiqar2026unified}, the interpolatory properties of fADI \cite{tu2009adi,benner2014computing} are studied, and it is shown that
\[
G(-\alpha_i)=\tilde{G}^{(k)}(-\alpha_i)
\]
when the reduced matrices are defined by
\begin{align}
\tilde A^{(k)}=
\begin{bmatrix}
\tilde A^{(k-1)} & 0\\
(\alpha_k+\beta_k)\mathbf{1}_{k-1}^\top & \beta_k
\end{bmatrix}\otimes I_m,\quad
\tilde B^{(k)}
=
\begin{bmatrix}
\tilde B^{(k-1)}\\
\alpha_k+\beta_k
\end{bmatrix}\otimes I_m,
\quad
\tilde C^{(k)}=CV_{\mathrm{fadi}}^{(k)},
\label{fadi_ss}
\end{align}
with $\tilde A^{(1)}=\beta_1$ and
$\tilde B^{(1)}=\alpha_1+\beta_1$, where $\mathbf{1}_{k}$ denotes the $k\times 1$ vector of ones.

The poles of $\tilde{G}^{(k)}(z)$ are given by $\beta_i$. Hence, fADI can be used to perform $\mathcal{H}_2$-pseudo-optimal MOR for discrete-time systems by choosing
\[
\alpha_i=-\sigma_i,
\qquad
\beta_i=-\frac{1}{\overline{\alpha}_i}.
\]
Thus, fADI provides a recursive implementation of DT-PORK \cite{zulfiqar2025compression} without modification.

Let $\tilde{P}^{(k)}$ denote the controllability Gramian associated with
$(\tilde A^{(k)},\tilde B^{(k)})$. By choosing
$\beta_i=-1/\overline{\alpha}_i$ with $|\alpha_i|>1$, the DT-PORK approximation of $P$ can be written as
\[
P\approx
V_{\mathrm{fadi}}^{(k)}
\tilde{P}^{(k)}
(V_{\mathrm{fadi}}^{(k)})^*.
\]
This is the same choice of shifts that reduces fADI to the LRCF-ADI method for Stein equations. The following proposition shows that
\[
\tilde{P}^{(k)}
=
X_{\mathrm{fadi}}^{(k)}D_{\mathrm{fadi}}^{(k)}.
\]
Therefore, DT-PORK and the LRCF-ADI method for Stein equations yield identical approximations of $P$ when $\alpha_i=-\sigma_i$.

\begin{proposition}
\label{lem:stein_solution}
Let $\tilde A^{(k)}$ and $\tilde B^{(k)}$ be defined as in \eqref{fadi_ss}. Let $\{\alpha_i\}_{i=1}^k$ and $\{\beta_i\}_{i=1}^k$ be fADI shifts satisfying $|\alpha_i|>1$, $\alpha_i+\beta_i\neq 0$, and $\beta_i=-1/\overline{\alpha}_i$. Define
\[
\rho_0:=1,\qquad
\rho_i:=\prod_{j=1}^i|\alpha_j|^2,\qquad
p_i:=(|\alpha_i|^2-1)\rho_{i-1}.
\]
Then
\[
\tilde P^{(k)}
=
\operatorname{diag}(p_1,\ldots,p_k)\otimes I_m
\]
solves the Stein equation
\begin{align}
\tilde A^{(k)}\tilde P^{(k)}(\tilde A^{(k)})^*
-\tilde P^{(k)}
+\tilde B^{(k)}(\tilde B^{(k)})^*
=0.
\label{proj_lyap}
\end{align}
\end{proposition}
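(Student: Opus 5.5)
Since $\tilde A^{(k)}$, $\tilde B^{(k)}$ and the candidate $\tilde P^{(k)}$ all have the form $(\cdot)\otimes I_m$, and $(X\otimes I_m)(Y\otimes I_m)(Z\otimes I_m)^*=(XYZ^*)\otimes I_m$, it suffices to prove the scalar case $m=1$. The plan is induction on $k$, using the block lower-triangular recursion \eqref{fadi_ss}.

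\textbf{Base case $k=1$.} With $\tilde A^{(1)}=\beta_1=-1/\overline{\alpha}_1$ and $\tilde B^{(1)}=\alpha_1+\beta_1$, we have
\[
|\beta_1|^2p_1-p_1+|\alpha_1+\beta_1|^2
=\Big(\tfrac{1}{|\alpha_1|^2}-1\Big)(|\alpha_1|^2-1)+\frac{(|\alpha_1|^2-1)^2}{|\alpha_1|^2}=0,
\]
because $\alpha_1+\beta_1=(|\alpha_1|^2-1)/\overline{\alpha}_1$.

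\textbf{Inductive step.} Write $\gamma_k:=\alpha_k+\beta_k=(|\alpha_k|^2-1)/\overline{\alpha}_k$. In the scalar case
\[
\tilde A^{(k)}=\begin{bmatrix}\tilde A^{(k-1)}&0\\ \gamma_k\mathbf 1^\top&\beta_k\end{bmatrix},\qquad
\tilde B^{(k)}=\begin{bmatrix}\tilde B^{(k-1)}\\ \gamma_k\end{bmatrix},\qquad
\tilde P^{(k)}=\operatorname{diag}(\tilde P^{(k-1)},p_k).
\]
Expand the Stein residual blockwise.
\begin{itemize}
\item The $(1,1)$ block is the residual of order $k-1$, which vanishes by the induction hypothesis.
\item The $(2,1)$ block is $\gamma_k\big(\mathbf 1^\top\tilde P^{(k-1)}(\tilde A^{(k-1)})^*+(\tilde B^{(k-1)})^*\big)$.
\item The $(2,2)$ block is $|\gamma_k|^2\,\mathbf 1^\top\tilde P^{(k-1)}\mathbf 1+|\beta_k|^2p_k-p_k+|\gamma_k|^2$.
\end{itemize}

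\textbf{The $(2,2)$ block.} We have $\mathbf 1^\top\tilde P^{(k-1)}\mathbf 1=\sum_{i<k}p_i=\rho_{k-1}-1$, since the sum telescopes: $p_i=\rho_i-\rho_{i-1}$. The block therefore equals $|\gamma_k|^2\rho_{k-1}-(1-|\alpha_k|^{-2})(|\alpha_k|^2-1)\rho_{k-1}$. Both terms equal $(|\alpha_k|^2-1)^2\rho_{k-1}/|\alpha_k|^2$, so the block is zero.

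\textbf{The $(2,1)$ block.} It suffices to show the auxiliary identity
\[
\tilde A^{(k-1)}\tilde P^{(k-1)}\mathbf 1_{k-1}=-\tilde B^{(k-1)}.
\]
I would strengthen the induction hypothesis to include this identity, and prove it by its own induction.
\begin{itemize}
\item For the first $k-2$ rows, the identity reduces to the one at level $k-2$.
\item For the last row, we need $\gamma_{k-1}\sum_{i<k-1}p_i+\beta_{k-1}p_{k-1}=-\gamma_{k-1}$. This becomes $\gamma_{k-1}\rho_{k-2}=-\beta_{k-1}p_{k-1}$, i.e.
\[
\frac{|\alpha_{k-1}|^2-1}{\overline{\alpha}_{k-1}}\,\rho_{k-2}=\frac{1}{\overline{\alpha}_{k-1}}(|\alpha_{k-1}|^2-1)\rho_{k-2},
\]
which holds.
\item The base case is $\beta_1p_1=-(|\alpha_1|^2-1)/\overline{\alpha}_1=-\gamma_1$.
\end{itemize}

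The main obstacle is spotting this auxiliary identity $\tilde A\tilde P\mathbf 1=-\tilde B$, which kills the off-diagonal block. Once it is in hand, everything else is the telescoping sum $\sum_{i<k}p_i=\rho_{k-1}-1$. The hypothesis $|\alpha_i|>1$ is only needed to make $p_i>0$, so that $\tilde P^{(k)}$ is a genuine (positive definite) Gramian; the algebraic identity holds regardless.
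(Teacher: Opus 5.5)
Your proof is correct and is essentially the paper's argument. The paper checks the Stein equation entrywise, and its strictly lower-triangular $(i,j)$ entry is exactly $\delta_i$ times the conjugate of the $j$th entry of $A_kP_k\mathbf 1_k+b_k$. Your auxiliary identity $\tilde A\tilde P\mathbf 1=-\tilde B$ and your $(2,2)$-block computation are therefore the same two calculations, both built on the telescoping sum $1+\sum_{\ell<i}p_\ell=\rho_{i-1}$, only organized as an induction on $k$. Since each row of your auxiliary identity does not depend on $k$, the induction can be dropped and each row and diagonal entry checked directly, as the paper does.
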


\begin{proof}
Set
\[
\delta_i=\alpha_i+\beta_i,
\qquad
p_i=(|\alpha_i|^2-1)\prod_{j=1}^{i-1}|\alpha_j|^2.
\]
Since $\tilde A^{(k)}$, $\tilde B^{(k)}$, and $\tilde P^{(k)}$ are obtained by tensoring scalar matrices with $I_m$, it is sufficient to prove the result for the scalar case. Let $A_k$, $b_k$, and $P_k$ denote the corresponding scalar matrices. The entries of $A_k$ are
\[
(A_k)_{i\ell}=
\begin{cases}
\beta_i, & \ell=i,\\
\delta_i, & \ell<i,\\
0, & \ell>i,
\end{cases}
\]
with
\[
b_k=(\delta_1,\ldots,\delta_k)^\top,
\qquad
P_k=\operatorname{diag}(p_1,\ldots,p_k).
\]

Since $\beta_i=-1/\overline{\alpha}_i$, it follows that
\[
\beta_i|\alpha_i|^2=-\alpha_i,
\qquad
|\delta_i|^2+(|\beta_i|^2-1)(|\alpha_i|^2-1)=0.
\]
Furthermore,
\[
p_i=
\prod_{j=1}^{i}|\alpha_j|^2
-
\prod_{j=1}^{i-1}|\alpha_j|^2,
\]
and therefore
\begin{equation}
1+\sum_{\ell=1}^{i-1}p_\ell
=
\prod_{j=1}^{i-1}|\alpha_j|^2.
\label{eq:telescope}
\end{equation}

For $i>j$, the $(i,j)$ entry of $A_kP_kA_k^*+b_kb_k^*$ is
\[
\delta_i\left(
\overline{\delta_j}
\Bigl(1+\sum_{\ell=1}^{j-1}p_\ell\Bigr)
+p_j\overline{\beta_j}
\right).
\]
Using \eqref{eq:telescope}, the term in parentheses becomes
\[
\prod_{r=1}^{j-1}|\alpha_r|^2
\left(
\overline{\delta_j}
+
(|\alpha_j|^2-1)\overline{\beta_j}
\right)
=
\prod_{r=1}^{j-1}|\alpha_r|^2
\left(
\overline{\alpha_j}
+
|\alpha_j|^2\overline{\beta_j}
\right)
=0.
\]
Hence, all strictly lower-triangular entries vanish. By symmetry, the strictly upper-triangular entries also vanish.

For the diagonal entries,
\[
(A_kP_kA_k^*+b_kb_k^*)_{ii}
=
|\delta_i|^2
\Bigl(1+\sum_{\ell=1}^{i-1}p_\ell\Bigr)
+
|\beta_i|^2p_i.
\]
Using \eqref{eq:telescope}, this becomes
\[
|\delta_i|^2\prod_{j=1}^{i-1}|\alpha_j|^2
+
|\beta_i|^2p_i
=
p_i+
\prod_{j=1}^{i-1}|\alpha_j|^2
\left(
|\delta_i|^2
+
(|\beta_i|^2-1)(|\alpha_i|^2-1)
\right)
=
p_i.
\]
Thus,
\[
A_kP_kA_k^*-P_k+b_kb_k^*=0.
\]
Tensoring with $I_m$ yields
\[
\tilde A^{(k)}\tilde P^{(k)}(\tilde A^{(k)})^*
-\tilde P^{(k)}
+\tilde B^{(k)}(\tilde B^{(k)})^*
=0.
\]
\end{proof}

It follows that the LRCF-ADI method for Stein equations \cite{benner2014computing,benner2011numerical} is a recursive implementation of DT-PORK \cite{zulfiqar2025compression}. In the LRCF-ADI method, the matrices $\tilde A^{(k)}$, $\tilde B^{(k)}$, and $\tilde C^{(k)}$ are not collected because a ROM is not required to approximate $P$. However, collecting these matrices yields an $\mathcal{H}_2$-pseudo-optimal ROM.
\subsection{Automatic Shift Generation for the LRCF-ADI Method for Stein Equations\label{sub1}}

We now consider the error of the ROM obtained from fADI with
\[
\alpha_i=-\sigma_i,
\qquad
\beta_i=-\frac{1}{\overline{\alpha}_i}.
\]
It is shown in \cite{zulfiqar2025compression} that the resulting ROM satisfies
\begin{align}
\|G(z)-\tilde{G}^{(k)}(z)\|_{\mathcal{H}_2}^2
&=\|G(z)\|_{\mathcal{H}_2}^2-\|\tilde{G}^{(k)}(z)\|_{\mathcal{H}_2}^2,\nonumber\\
&=\mathrm{trace}(CPC^\top)
-\mathrm{trace}\big(\tilde C^{(k)}\tilde P^{(k)}(\tilde C^{(k)})^*\big),\nonumber\\
&=\mathrm{trace}\Big(
C\big(P-V_{\mathrm{fadi}}^{(k)}
\tilde P^{(k)}
(V_{\mathrm{fadi}}^{(k)})^*\big)C^\top
\Big).
\end{align}
Furthermore,
\begin{align}
\|G(z)-\tilde{G}^{(k)}(z)\|_{\mathcal{H}_2}^2
\leq
\|G(z)-\tilde{G}^{(k-1)}(z)\|_{\mathcal{H}_2}^2.
\label{mono_decay}
\end{align}
Therefore, $\tilde{G}^{(k)}(z)$ and
$V_{\mathrm{fadi}}^{(k)}\tilde P^{(k)}(V_{\mathrm{fadi}}^{(k)})^*$
converge monotonically to $G(z)$ and $P$, respectively, as the number of shifts increases.

Define
\[
G_{\perp}^{(k)}(z)
=
C(zE-A)^{-1}B_{\perp,k}^{\mathrm{fadi}}.
\]
The approximation error can then be factorized as
\[
G(z)-\tilde{G}^{(k)}(z)
=
G_{\perp}^{(k)}(z)
\left(
(\mathbf{1}_{k}^\top\otimes I_m)
\big(zI-\tilde A^{(k)}\big)^{-1}
\tilde B^{(k)}
+I_m
\right);
\]
see \cite{wolf2014h}.

It is shown in \cite{zulfiqar2026unified} that fADI is a special case of the recursive interpolation framework in \cite{wolf2014h,panzer2014model}. Hence, the error analysis of this framework also applies to fADI. The aim of recursive interpolation is to make the pair
$(E^{-1}A,E^{-1}B_{\perp,k}^{\mathrm{fadi}})$ uncontrollable as the number of interpolation points increases. The transfer function $G_{\perp}^{(k)}(z)$ can be viewed as a deflated version of $G(z)$. In particular, frequency-response peaks associated with controllable poles that have been captured by $\tilde{G}^{(k)}(z)$ are attenuated in $G_{\perp}^{(k)}(z)$ \cite{zulfiqar2026unified}. When these peaks are sufficiently attenuated, the error
$G(z)-\tilde{G}^{(k)}(z)$ decreases significantly. If no controllable poles remain in $G_{\perp}^{(k)}(z)$, then $\tilde{G}^{(k)}(z)$ recovers $G(z)$ exactly, and
\[
G(z)-\tilde{G}^{(k)}(z)=0;
\]
see \cite{wolf2014h}.

The monotonicity property in \eqref{mono_decay} does not indicate the rate of error decay. It is well known that the $\mathcal{H}_2$ norm error is dominated by the most controllable and observable poles of $G(z)$. If $\tilde{G}^{(k)}(z)$ interpolates at the reciprocals of these poles, the error decreases rapidly \cite{bunse2010h2,gugercin2008h_2}. Although $G(z)$ and $G_{\perp}^{(k)}(z)$ have the same poles, the poles already captured by $\tilde{G}^{(k)}(z)$ become weakly controllable or uncontrollable in $G_{\perp}^{(k)}(z)$. Therefore, the reciprocals of the dominant poles of $G_{\perp}^{(k)}(z)$ can be used as interpolation points $\sigma_i=-\alpha_i$ for the LRCF-ADI method.

In large-scale settings, computing the poles of $G_{\perp}^{(k)}(z)$ is infeasible. Instead, Ritz values of $E^{-1}A$ can be computed by projection onto the interpolation basis accumulated during the LRCF-ADI iteration. Define
\[
V_{\mathrm{proj}}
=
\mathrm{orth}\left(
\begin{bmatrix}
v_1^{\mathrm{fadi}} & \cdots & v_i^{\mathrm{fadi}}
\end{bmatrix}
\right).
\]
An implicit restart can be used: if the number of columns of
$V_{\mathrm{proj}}$ exceeds a prescribed limit, the previous vectors are discarded and a new basis is accumulated. Such restart mechanisms are commonly used in eigenvalue algorithms to limit the basis dimension \cite{saad2011numerical}.

Next, form
\[
E_p=V_{\mathrm{proj}}^\top E V_{\mathrm{proj}},\qquad
A_p=V_{\mathrm{proj}}^\top A V_{\mathrm{proj}},\qquad
B_p=V_{\mathrm{proj}}^\top B_{\perp,i}^{\mathrm{fadi}}.
\]
Compute the eigendecomposition
\[
E_p^{-1}A_p
=
T_p\,\mathrm{diag}(\hat{\lambda}_1,\ldots,\hat{\lambda}_i)T_p^{-1},
\]
and define
\[
r_{p,l}=T_p^{-1}(l,:)B_p.
\]
The most controllable Ritz value is selected as the value
$\hat{\lambda}_d$ corresponding to the largest quantity
\[
\phi_l
=
\frac{\|r_{p,l}\|_2^2}
{1-|\hat{\lambda}_l|^2};
\]
see \cite{rommes2007methods}. The next shift is then chosen as
\[
\alpha_i=-\frac{1}{\hat{\lambda}_d}.
\]
If all Ritz values of $E_p^{-1}A_p$ lie outside the unit circle, the previous shift is repeated. Thus, after initialization with an arbitrary shift, the LRCF-ADI method generates subsequent shifts automatically without user intervention.
\subsection{A Generalized LRCF-ADI Algorithm for Stein Equations}

In \cite{zulfiqar2025compression}, it is shown that DT-PORK-based, and hence LRCF-ADI-based, BT can be performed non-intrusively using samples $G(\sigma_i)$ without access to the state-space realization $(E,A,B,C)$. However, DT-PORK requires $|\sigma_i|>1$, whereas experimentally measurable frequency-response samples are available only on the unit circle, i.e., $|\sigma_i|=1$. Therefore, the non-intrusive DT-PORK-based BT approach in \cite{zulfiqar2025compression} is not data-driven.

The projected Stein equation \eqref{proj_lyap} has a unique solution provided that $|\beta_i|<1$ and $\alpha_i\neq-\beta_i$. Hence, if the $\mathcal{H}_2$-pseudo-optimality condition is not imposed, the shifts $\alpha_i$ may be selected anywhere in the complex plane, subject to $\alpha_i\neq-\beta_i$. The following theorem shows that $\tilde{P}^{(k)}$ can still be factorized as
\[
\tilde{P}^{(k)}=\tilde{Z}^{(k)}(\tilde{Z}^{(k)})^*,
\]
which leads to a generalized LRCF-ADI algorithm for Stein equations.

\begin{theorem}
\label{th1}
Let $\tilde A^{(k)}$ and $\tilde B^{(k)}$ be defined as in \eqref{fadi_ss}. Let $\{\alpha_i\}_{i=1}^k$ and $\{\beta_i\}_{i=1}^k$ be fADI shifts satisfying $\alpha_i+\beta_i\neq 0$, $|\beta_i|<1$, and $\beta_i\neq 0$. Define the upper-triangular matrix
\[
\hat{T}^{(k)}
=
\begin{bmatrix}
\hat{T}^{(k-1)} & t_1^{(k)}\\
0 & t_2^{(k)}
\end{bmatrix}\otimes I_m,
\]
where
\begin{align}
\left[t_1^{(k)}\right]_i
&=
\frac{
1+\displaystyle\sum_{r=i}^{k-1}
\overline{(\alpha_r+\beta_r)}\,\hat{T}^{(k-1)}_{ir}
-\displaystyle\sum_{r=1}^{i-1}
\left(\frac{1}{\beta_r}-\overline{\beta_r}\right)
\left[t_1^{(k)}\right]_r
}{
\frac{1}{\beta_i}-\overline{\beta_k}
},
\nonumber\\
t_2^{(k)}
&=
\frac{
1-\displaystyle\sum_{r=1}^{k-1}
\left(\frac{1}{\beta_r}-\overline{\beta_r}\right)
\left[t_1^{(k)}\right]_r
}{
\frac{1}{\beta_k}-\overline{\beta_k}
},
\label{compute_t}
\end{align}
for $i=1,\ldots,k-1$.

Furthermore, define
\[
Z^{(k)}
=
\begin{bmatrix}
\sqrt{\frac{1-|\beta_1|^2}{|\beta_1|^2}} & \cdots & 0 \\[1ex]
\vdots & \ddots & \vdots \\[1ex]
0 & \cdots &
\sqrt{\frac{1-|\beta_k|^2}{\prod_{j=1}^{k}|\beta_j|^2}}
\end{bmatrix}
\otimes I_m
\]
and
\[
\tilde{Z}^{(k)}
=
X_{\mathrm{fadi}}^{(k)}(\hat{T}^{(k)})^*Z^{(k)}.
\]
Then, the controllability Gramian $\tilde{P}^{(k)}$ admits the factorization
\[
\tilde{P}^{(k)}
=
\tilde{Z}^{(k)}(\tilde{Z}^{(k)})^*.
\]
\end{theorem}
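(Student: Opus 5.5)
The plan is to reduce the factorization to an LDL$^*$-type (Cholesky) statement for a rescaled Gramian and then argue by induction on $k$, exploiting the lower-triangular structure of $\tilde A^{(k)}$. As in the proof of Proposition~\ref{lem:stein_solution}, every object is a scalar matrix tensored with $I_m$, so I will work with the scalar case only. I write $\delta_i=\alpha_i+\beta_i$, $D_\delta=\operatorname{diag}(\delta_1,\ldots,\delta_k)$ and $L_k$ for the lower-triangular matrix of ones including the diagonal.

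\textbf{Step 1: rewrite the reduced matrices.} From \eqref{fadi_ss}, the reduced matrices are
\[
A_k=\operatorname{diag}(\beta_i-\delta_i)+D_\delta L_k=-\operatorname{diag}(\alpha_i)+D_\delta L_k,
\qquad b_k=D_\delta\mathbf{1}_k .
\]
Since $X_{\mathrm{fadi}}^{(k)}=-D_\delta$ and $\delta_i\neq0$, I define
\[
Y_k=D_\delta^{-1}P_kD_\delta^{-*}.
\]
Then the claim $P_k=\tilde Z\tilde Z^*$ is equivalent to
\[
Y_k=T_k^*\,Z_k^2\,T_k ,
\]
where $T_k$ is the scalar $\hat T^{(k)}$ and $Z_k^2=\operatorname{diag}(z_1^2,\ldots,z_k^2)$ with
\[
z_i^2=\frac{1-|\beta_i|^2}{\prod_{j\le i}|\beta_j|^2}>0 .
\]
The positivity uses $0<|\beta_i|<1$. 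Moreover, $Y_k$ is the unique solution of
\[
\hat A_kY_k\hat A_k^*-Y_k+\mathbf 1\mathbf 1^*=0,\qquad \hat A_k=D_\delta^{-1}A_kD_\delta ,
\]
and uniqueness holds because $\hat A_k$ is triangular with diagonal $\beta_i$ and $|\beta_i|<1$.

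\textbf{Step 2: bordering and induction.} Because $\hat A_k$ is lower triangular, its leading $(k-1)\times(k-1)$ block is $\hat A_{k-1}$. The leading block of the Stein equation therefore decouples, so $Y_k$ has the bordered form
\[
Y_k=\begin{bmatrix}Y_{k-1}&y\\ y^*&\eta\end{bmatrix}.
\]
The factor $T_k^*Z_k^2T_k$ has the same leading block $T_{k-1}^*Z_{k-1}^2T_{k-1}$, which equals $Y_{k-1}$ by the induction hypothesis. The base case $k=1$ is the scalar identity $|\beta_1|^2y-y+1=0$, giving $y=1/(1-|\beta_1|^2)$. This matches $|t_2^{(1)}|^2z_1^2$ with $t_2^{(1)}=1/(1/\beta_1-\overline{\beta_1})=\beta_1/(1-|\beta_1|^2)$.

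It then remains to show two identities for the last column and the corner entry:
\[
y=T_{k-1}^*Z_{k-1}^2t_1^{(k)},\qquad
\eta=(t_1^{(k)})^*Z_{k-1}^2t_1^{(k)}+z_k^2|t_2^{(k)}|^2 .
\]
Both $y$ and $\eta$ are determined by the last column of the Stein equation. That column is a linear system $(I-\overline{\beta_k}\hat A_{k-1})y=r$, where $r$ depends on $Y_{k-1}$ and the last row of $\hat A_k$. The system is triangular, since $\hat A_{k-1}$ is lower triangular and $\overline{\beta_k}\beta_i\neq1$, so it can be solved by forward substitution.

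\textbf{Step 3: matching the forward substitution with \eqref{compute_t} (main obstacle).} The crux is to show that this forward substitution is exactly the recursion \eqref{compute_t} after the change of variables $y=T_{k-1}^*Z_{k-1}^2t_1$. Two observations should make the recursion recognisable:
\[
\frac1{\beta_r}-\overline{\beta_r}=\frac{1-|\beta_r|^2}{\beta_r},\qquad\text{which is } z_r^2\prod_{j\le r}|\beta_j|^2/\beta_r ,
\]
and the entries of $\hat A_k$ in the strictly lower part are $\delta$-weighted ones, which produces the sums $\sum_r\overline{\delta_r}\hat T^{(k-1)}_{ir}$.

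My first attempt will not be to work directly with $y$. Instead I will premultiply the last-column equation by $T_{k-1}^{-*}$ and use the induction hypothesis in the form
\[
Y_{k-1}=T_{k-1}^*Z_{k-1}^2T_{k-1}.
\]
I would also seek a companion identity of Sylvester type, $T_{k-1}\hat A_{k-1}^*=\operatorname{diag}(\cdot)\,T_{k-1}+(\cdot)$, which is itself provable inductively from \eqref{compute_t}. With it, the transformed system for $t_1$ becomes the triangular recursion in \eqref{compute_t}, whose denominators are $1/\beta_i-\overline{\beta_k}$.

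Once $y$ is matched, the corner identity for $\eta$ follows from the diagonal entry of the last column. The argument mirrors the diagonal computation in Proposition~\ref{lem:stein_solution}: substituting the expression for $t_2^{(k)}$ should make the residual collapse by a telescoping argument. Tensoring with $I_m$ then finishes the proof.
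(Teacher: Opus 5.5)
Your plan is correct and follows essentially the same route as the paper's proof. Both arguments apply congruence by $(X_{\mathrm{fadi}}^{(k)})^{-1}$ to reach the normalized Stein equation with constant term $\mathbf 1_k\mathbf 1_k^*$. Both then use a bordered induction in which the last-column equations are matched with \eqref{compute_t} by forward substitution, and the corner residual collapses to $\bigl(|1-\sum_{r<k}a_r[t_1^{(k)}]_r|^2-|a_kt_2^{(k)}|^2\bigr)/\prod_{j<k}|\beta_j|^2$, where $a_r=1/\beta_r-\overline{\beta_r}$.

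The companion identity you are looking for does exist, and it closes your Step~3 (the paper itself writes out only $k\le 3$ and asserts the general step). Let $a=(a_1,\ldots,a_{k-1})^\top$, $S=T_{k-1}^*Z_{k-1}^2$, and let $\Lambda$ be the lower-triangular matrix with diagonal entries $1/\beta_i$ and entries $a_r$ below the diagonal in column $r$. One proves $\hat A_{k-1}S\Lambda=S$ inductively from \eqref{compute_t}, using the telescoping identities $\Lambda^*Z_{k-1}^2\Lambda-Z_{k-1}^2=\overline{a}\,a^\top/\prod_{j<k}|\beta_j|^2$ and $\mathbf 1^\top Z_{k-1}^2\Lambda=a^\top/\prod_{j<k}|\beta_j|^2$. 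Given that identity, $\hat A_{k-1}S\mathbf 1=\mathbf 1$ is equivalent to the $t_2$ recursions $\sum_{r\le j}a_r(T_{k-1})_{rj}=1$. With $c=(\overline{\delta_1},\ldots,\overline{\delta_{k-1}})^\top$, these two facts turn $(I-\overline{\beta_k}\hat A_{k-1})St_1^{(k)}=\hat A_{k-1}ST_{k-1}c+\mathbf 1$ into $(\Lambda-\overline{\beta_k}I)t_1^{(k)}=\mathbf 1+T_{k-1}c$, which is exactly \eqref{compute_t}.
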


\begin{proof}
The proof is given in the appendix.
\end{proof}

By Theorem \ref{th1}, a projection-based approximation of $P$ is given by
\[
P\approx
\big(V_{\mathrm{fadi}}^{(k)}\tilde{Z}^{(k)}\big)
\big(V_{\mathrm{fadi}}^{(k)}\tilde{Z}^{(k)}\big)^*.
\]
The following proposition gives the residual expression for this approximation.

\begin{proposition}
Let the conditions of Theorem \ref{th1} hold. Define
\[
\hat{P}
=
V_{\mathrm{fadi}}^{(k)}
\tilde{P}^{(k)}
(V_{\mathrm{fadi}}^{(k)})^*
\]
and
\[
L^{(k)}=-\mathbf{1}_{k}^\top\otimes I_m.
\]
Then,
\begin{align}
A\hat{P}A^\top-E\hat{P}E^\top+BB^\top
=
F^{(k)}M^{(k)}(F^{(k)})^*,
\end{align}
where
\begin{align}
F^{(k)}
&=
\begin{bmatrix}
B_{\perp,k}^{\mathrm{fadi}} &
EV_{\mathrm{fadi}}^{(k)}
\big(
\tilde{B}^{(k)}
-\tilde{A}^{(k)}
\tilde{P}^{(k)}
(L^{(k)})^\top
\big)
\end{bmatrix},
\nonumber\\
M^{(k)}
&=
\begin{bmatrix}
I_m+L^{(k)}\tilde{P}^{(k)}(L^{(k)})^\top & I_m\\
I_m & \mathbf{0}
\end{bmatrix}.
\end{align}
\end{proposition}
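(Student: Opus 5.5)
The plan is to express everything in terms of two structural identities for the fADI basis and then expand the residual directly, using the reduced Stein equation \eqref{proj_lyap} to cancel terms. Write $V=V_{\mathrm{fadi}}^{(k)}$, $B_\perp=B_{\perp,k}^{\mathrm{fadi}}$, $\tilde A=\tilde A^{(k)}$, $\tilde B=\tilde B^{(k)}$, $\tilde P=\tilde P^{(k)}$ and $L=L^{(k)}$. The two identities I would establish first are
\begin{align}
B &= B_\perp + EV\tilde B, \label{plan_B}\\
AV &= EV\tilde A - B_\perp L. \label{plan_sylv}
\end{align}
Identity \eqref{plan_B} is immediate from telescoping the recursion $B_{\perp,i}^{\mathrm{fadi}}=B_{\perp,i-1}^{\mathrm{fadi}}-(\alpha_i+\beta_i)Ev_i^{\mathrm{fadi}}$. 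This uses that the entries of $\tilde B$ in \eqref{fadi_ss} are exactly $\alpha_i+\beta_i$.

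Identity \eqref{plan_sylv} I would prove column block by column block, or equivalently by induction on $k$. From $(A+\alpha_iE)v_i^{\mathrm{fadi}}=B_{\perp,i-1}^{\mathrm{fadi}}$ and the update rule one gets
\[
Av_i^{\mathrm{fadi}}=B_{\perp,i}^{\mathrm{fadi}}+\beta_iEv_i^{\mathrm{fadi}}.
\]
Then express $B_{\perp,i}^{\mathrm{fadi}}=B_\perp+\sum_{j>i}(\alpha_j+\beta_j)Ev_j^{\mathrm{fadi}}$. The $i$th block column of $EV\tilde A$ is $\beta_iEv_i^{\mathrm{fadi}}+\sum_{j>i}(\alpha_j+\beta_j)Ev_j^{\mathrm{fadi}}$, because $\tilde A$ is lower triangular with $\beta_i$ on the diagonal and $\alpha_j+\beta_j$ below it in row $j$. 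So the $i$th block column of $AV-EV\tilde A$ is $B_\perp$, which equals $-B_\perp L$ column-wise since $L=-\mathbf 1_k^\top\otimes I_m$. This index bookkeeping, in particular matching the lower-triangular pattern of \eqref{fadi_ss} and the sign convention of $L$, is the step I expect to need the most care. The rest is algebra.

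With these identities in hand, I substitute $AV$ from \eqref{plan_sylv} into $A\hat PA^\top=(AV)\tilde P(AV)^*$ and $B$ from \eqref{plan_B} into $BB^\top$. Since $A,E,B$ are real, $A^\top=A^*$, so conjugate transposes are consistent throughout. Expanding gives
\[
EV\tilde A\tilde P\tilde A^*V^*E^*-EV\tilde A\tilde PL^*B_\perp^*-B_\perp L\tilde P\tilde A^*V^*E^*+B_\perp L\tilde PL^*B_\perp^*
\]
from the first term. The term $BB^\top$ contributes $B_\perp B_\perp^*+B_\perp\tilde B^*V^*E^*+EV\tilde BB_\perp^*+EV\tilde B\tilde B^*V^*E^*$, and we subtract $EV\tilde PV^*E^*$.

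The reduced Stein equation \eqref{proj_lyap}, whose solvability and factorization are guaranteed under the hypotheses of Theorem \ref{th1}, gives $\tilde A\tilde P\tilde A^*=\tilde P-\tilde B\tilde B^*$. Using it, the three terms sandwiched between $EV$ and $V^*E^*$ cancel completely.

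What remains groups as
\[
B_\perp\big(I_m+L\tilde PL^*\big)B_\perp^*+B_\perp\big(EV(\tilde B-\tilde A\tilde PL^*)\big)^*+EV(\tilde B-\tilde A\tilde PL^*)B_\perp^*.
\]
This is exactly $F^{(k)}M^{(k)}(F^{(k)})^*$ with the stated block matrices, after noting $L^\top=L^*$ since $L$ is real and $\tilde P$ is Hermitian.
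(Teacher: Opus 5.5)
Your proposal is correct and follows essentially the same route as the paper's proof. You substitute $AV=EV\tilde A-B_\perp L$ and $B=B_\perp+EV\tilde B$, cancel the terms sandwiched between $EV$ and $V^*E^\top$ using the reduced Stein equation, and regroup the rest into $F^{(k)}M^{(k)}(F^{(k)})^*$. The only difference is that you derive the two structural identities directly from the fADI recursion (your column-by-column check against the lower-triangular pattern of $\tilde A^{(k)}$ is right), whereas the paper takes them from its earlier fADI reference.
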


\begin{proof}
The proof uses the following relations from \cite{zulfiqar2026unified}:
\begin{align}
AV_{\mathrm{fadi}}^{(k)}
-EV_{\mathrm{fadi}}^{(k)}\tilde{A}^{(k)}
+B_{\perp,k}^{\mathrm{fadi}}L^{(k)}
&=0,
\nonumber\\
B_{\perp,k}^{\mathrm{fadi}}
&=
B-EV_{\mathrm{fadi}}^{(k)}\tilde{B}^{(k)}.
\end{align}
Moreover,
\begin{align}
BB^\top
&=
B_{\perp,k}^{\mathrm{fadi}}(B_{\perp,k}^{\mathrm{fadi}})^*
+
B_{\perp,k}^{\mathrm{fadi}}(\tilde{B}^{(k)})^*
(V_{\mathrm{fadi}}^{(k)})^*E^\top+
EV_{\mathrm{fadi}}^{(k)}\tilde{B}^{(k)}
(B_{\perp,k}^{\mathrm{fadi}})^*
+
EV_{\mathrm{fadi}}^{(k)}\tilde{B}^{(k)}
(\tilde{B}^{(k)})^*
(V_{\mathrm{fadi}}^{(k)})^*E^\top,
\nonumber\\
AV_{\mathrm{fadi}}^{(k)}
&=
EV_{\mathrm{fadi}}^{(k)}\tilde{A}^{(k)}
-B_{\perp,k}^{\mathrm{fadi}}L^{(k)},
\nonumber\\
\tilde B^{(k)}(\tilde B^{(k)})^*
&=
-\tilde A^{(k)}\tilde P^{(k)}(\tilde A^{(k)})^*
+\tilde P^{(k)}.
\end{align}
Therefore,
\begin{align}
&A\hat{P}A^\top-E\hat{P}E^\top+BB^\top
\nonumber\\
&=
AV_{\mathrm{fadi}}^{(k)}
\tilde P^{(k)}
(V_{\mathrm{fadi}}^{(k)})^*A^\top
-EV_{\mathrm{fadi}}^{(k)}
\tilde P^{(k)}
(V_{\mathrm{fadi}}^{(k)})^*E^\top
+BB^\top
\nonumber\\
&=
\big(
EV_{\mathrm{fadi}}^{(k)}\tilde A^{(k)}
-B_{\perp,k}^{\mathrm{fadi}}L^{(k)}
\big)
\tilde P^{(k)}
\big(
EV_{\mathrm{fadi}}^{(k)}\tilde A^{(k)}
-B_{\perp,k}^{\mathrm{fadi}}L^{(k)}
\big)^*-EV_{\mathrm{fadi}}^{(k)}
\tilde P^{(k)}
(V_{\mathrm{fadi}}^{(k)})^*E^\top
+BB^\top
\nonumber\\
&=
EV_{\mathrm{fadi}}^{(k)}
\Big(
\tilde A^{(k)}
\tilde P^{(k)}
(\tilde A^{(k)})^*
-\tilde P^{(k)}
\Big)
(V_{\mathrm{fadi}}^{(k)})^*E^\top-EV_{\mathrm{fadi}}^{(k)}
\tilde A^{(k)}
\tilde P^{(k)}
(L^{(k)})^\top
(B_{\perp,k}^{\mathrm{fadi}})^*
\nonumber\\
&\quad
-B_{\perp,k}^{\mathrm{fadi}}
L^{(k)}
\tilde P^{(k)}
(\tilde A^{(k)})^*
(V_{\mathrm{fadi}}^{(k)})^*E^\top+B_{\perp,k}^{\mathrm{fadi}}
L^{(k)}
\tilde P^{(k)}
(L^{(k)})^\top
(B_{\perp,k}^{\mathrm{fadi}})^*
+BB^\top
\nonumber\\
&=
B_{\perp,k}^{\mathrm{fadi}}
(B_{\perp,k}^{\mathrm{fadi}})^*
+
B_{\perp,k}^{\mathrm{fadi}}
(\tilde B^{(k)})^*
(V_{\mathrm{fadi}}^{(k)})^*E^\top+EV_{\mathrm{fadi}}^{(k)}
\tilde B^{(k)}
(B_{\perp,k}^{\mathrm{fadi}})^*
-EV_{\mathrm{fadi}}^{(k)}
\tilde A^{(k)}
\tilde P^{(k)}
(L^{(k)})^\top
(B_{\perp,k}^{\mathrm{fadi}})^*
\nonumber\\
&\quad
-B_{\perp,k}^{\mathrm{fadi}}
L^{(k)}
\tilde P^{(k)}
(\tilde A^{(k)})^*
(V_{\mathrm{fadi}}^{(k)})^*E^\top+B_{\perp,k}^{\mathrm{fadi}}
L^{(k)}
\tilde P^{(k)}
(L^{(k)})^\top
(B_{\perp,k}^{\mathrm{fadi}})^*
\nonumber\\
&=
B_{\perp,k}^{\mathrm{fadi}}
\Big(
I_m+L^{(k)}\tilde P^{(k)}(L^{(k)})^\top
\Big)
(B_{\perp,k}^{\mathrm{fadi}})^*+EV_{\mathrm{fadi}}^{(k)}
\Big(
\tilde B^{(k)}
-\tilde A^{(k)}
\tilde P^{(k)}
(L^{(k)})^\top
\Big)
(B_{\perp,k}^{\mathrm{fadi}})^*
\nonumber\\
&\quad
+B_{\perp,k}^{\mathrm{fadi}}
\Big(
(\tilde B^{(k)})^*
-L^{(k)}
\tilde P^{(k)}
(\tilde A^{(k)})^*
\Big)
(V_{\mathrm{fadi}}^{(k)})^*E^\top
\nonumber\\
&=
F^{(k)}M^{(k)}(F^{(k)})^*.\nonumber
\end{align}
\end{proof}

The residual $F^{(k)}M^{(k)}(F^{(k)})^*$ has rank at most $2m$. Hence, its Frobenius norm or spectral norm can be computed efficiently when $m\ll n$.

In practical applications, a real-valued approximation is often preferred:
\[
P\approx
\big(V_r^{(k)}\tilde{Z}_r^{(k)}\big)
\big(V_r^{(k)}\tilde{Z}_r^{(k)}\big)^\top,
\]
where $V_r^{(k)}$ and $\tilde{Z}_r^{(k)}$ are real-valued. This also reduces storage requirements. In the following, the realification strategy for fADI in \cite{benner2014computing} is extended to the present setting.

Assume that the shifts $\alpha_i$ and $\beta_i$ are ordered according to one of the following cases:
\begin{enumerate}
\item $\alpha_i\in\mathbb{R}$ and $\beta_i\in\mathbb{R}$.
\item $\alpha_i\in\mathbb{C}$, $\alpha_{i+1}=\overline{\alpha}_i$, $\beta_i\in\mathbb{C}$, and $\beta_{i+1}=\overline{\beta}_i$.
\item $\alpha_i\in\mathbb{C}$, $\alpha_{i+1}=\overline{\alpha}_i$, $\beta_i\in\mathbb{R}$, and $\beta_{i+1}\in\mathbb{R}$.
\item $\alpha_i\in\mathbb{R}$, $\alpha_{i+1}\in\mathbb{R}$, $\beta_i\in\mathbb{C}$, and $\beta_{i+1}=\overline{\beta}_i$.
\end{enumerate}

Define $t(\mu_i,\mu_{i+1},\nu_i,\nu_{i+1})$ by
\begin{align}
t(\mu_i,\mu_{i+1},\nu_i,\nu_{i+1})
=
\begin{cases}
I_m,
&
\text{if } \mu_i\in\mathbb{R},\;
\nu_i\in\mathbb{R},
\\[2.5ex]
\dfrac{1}{\overline{\mu}_i+\nu_i}
\begin{bmatrix}
\operatorname{Re}(\mu_i)+\nu_i & -\operatorname{Im}(\mu_i)\\
-j\,\operatorname{Im}(\mu_i) & \operatorname{Im}(\mu_i)
\end{bmatrix}\otimes I_m,
&
\begin{array}{l}
\text{if } \mu_i\in\mathbb{C},\;
\mu_{i+1}=\overline{\mu}_i,\\
\nu_i\in\mathbb{C},\;
\nu_{i+1}=\overline{\nu}_i,
\end{array}
\\[5ex]
\dfrac{1}{\overline{\mu}_i+\nu_i}
\begin{bmatrix}
\operatorname{Re}(\mu_i)+\nu_i & -\operatorname{Im}(\mu_i)\\
-j\,\operatorname{Im}(\mu_i) & \operatorname{Im}(\mu_i)
\end{bmatrix}\otimes I_m,
&
\begin{array}{l}
\text{if } \mu_i\in\mathbb{C},\;
\mu_{i+1}=\overline{\mu}_i,\\
\nu_i\in\mathbb{R},\;
\nu_{i+1}\in\mathbb{R},
\end{array}
\\[5ex]
\begin{bmatrix}
1 & \dfrac{1}{\mu_{i+1}+\nu_i}\\
0 & -\dfrac{1}{\mu_{i+1}+\nu_i}
\end{bmatrix}\otimes I_m,
&
\begin{array}{l}
\text{if } \mu_i,\mu_{i+1}\in\mathbb{R},\\
\nu_i\in\mathbb{C},\;
\nu_{i+1}=\overline{\nu}_i.
\end{array}
\end{cases}
\label{gen_t}
\end{align}

Define
\begin{align}
T_v^{(k)}
&=
\mathrm{blkdiag}\big(T_v^{(k-1)},t_v^{(k)}\big),
\nonumber\\
T_w^{(k)}
&=
\mathrm{blkdiag}\big(T_w^{(k-1)},t_w^{(k)}\big),
\nonumber\\
T_p^{(k)}
&=
\mathrm{blkdiag}\big(T_p^{(k-1)},t_p^{(k)}\big),
\end{align}
where $t_v^{(k)}$, $t_w^{(k)}$, and $t_p^{(k)}$ are obtained from \eqref{gen_t} by setting
\[
(\mu_i,\nu_i)=(\alpha_i,\beta_i),\qquad
(\mu_i,\nu_i)=(\overline{\beta}_i,\overline{\alpha}_i),\qquad
(\mu_i,\nu_i)=\left(-\frac{1}{\overline{\beta}_i},\beta_i\right),
\]
respectively.

The approximation $\hat{P}$ can then be realified as
\begin{align}
\hat{P}
&=
\underbrace{V_{\mathrm{fadi}}^{(k)}T_v^{(k)}}_{V_r^{(k)}}
\underbrace{
(T_v^{(k)})^{-1}
X_{\mathrm{fadi}}^{(k)}
(T_w^{(k)})^{-*}
}_{X_r^{(k)}}
\underbrace{
(T_w^{(k)})^*
(\hat{T}^{(k)})^*
T_p^{(k)}
}_{(T_r^{(k)})^\top}
\nonumber\\
&\quad\times
\underbrace{
(T_p^{(k)})^{-1}
Z^{(k)}(Z^{(k)})^*
(T_p^{(k)})^{-*}
}_{Z_r^{(k)}(Z_r^{(k)})^\top}
\underbrace{
(T_p^{(k)})^*
\hat{T}^{(k)}
T_w^{(k)}
}_{T_r^{(k)}}
\underbrace{
(T_w^{(k)})^{-1}
(X_{\mathrm{fadi}}^{(k)})^*
(T_v^{(k)})^{-*}
}_{(X_r^{(k)})^\top}
\underbrace{
(T_v^{(k)})^*
(V_{\mathrm{fadi}}^{(k)})^*
}_{(V_r^{(k)})^\top},
\end{align}
where $V_r^{(k)}$, $X_r^{(k)}$, $T_r^{(k)}$, and $Z_r^{(k)}$ are real-valued.

Represent the realified matrices recursively as
\begin{align}
V_r^{(k)}
&=
\begin{bmatrix}
V_r^{(k-1)} & v_r^{(k)}
\end{bmatrix},
\nonumber\\
X_r^{(k)}
&=
\mathrm{blkdiag}\big(X_r^{(k-1)},x_r^{(k)}\big),
\nonumber\\
Z_r^{(k)}
&=
\mathrm{blkdiag}\big(Z_r^{(k-1)},z_r^{(k)}\big).
\end{align}
If $\alpha_i,\beta_i\in\mathbb{R}$, then
\begin{align}
v_r^{(i)}=v_i^{\mathrm{fadi}},
\qquad
x_r^{(i)}=-(\alpha_i+\beta_i)I_m.
\end{align}
If $\alpha_i\in\mathbb{C}$, $\alpha_{i+1}=\overline{\alpha}_i$,
$\beta_i\in\mathbb{C}$, and $\beta_{i+1}=\overline{\beta}_i$, then
\begin{align}
v_r^{(i)}
&=
\begin{bmatrix}
v_i^{\mathrm{fadi}} & v_{i+1}^{\mathrm{fadi}}
\end{bmatrix}
t_v^{(i)}
=
\begin{bmatrix}
\operatorname{Re}(v_i^{\mathrm{fadi}}) &
\operatorname{Im}(v_i^{\mathrm{fadi}})
\end{bmatrix},
\nonumber\\
x_r^{(i)}
&=
(t_v^{(i)})^{-1}
\begin{bmatrix}
-(\alpha_i+\beta_i)I_m & \mathbf{0}\\
\mathbf{0} & -(\alpha_{i+1}+\beta_{i+1})I_m
\end{bmatrix}
(t_w^{(i)})^{-*}
\nonumber\\
&=
\begin{bmatrix}
2\big(\operatorname{Re}(-\alpha_i)-\operatorname{Re}(\beta_i)\big)
&
\frac{
|\alpha_i+\beta_i|^2
+2\operatorname{Im}(\beta_i)
\big(\operatorname{Im}(-\alpha_i)-\operatorname{Im}(\beta_i)\big)
}{
\operatorname{Im}(\beta_i)
}
\\[3ex]
\frac{
|\alpha_i+\beta_i|^2
-2\operatorname{Im}(-\alpha_i)
\big(\operatorname{Im}(-\alpha_i)-\operatorname{Im}(\beta_i)\big)
}{
\operatorname{Im}(-\alpha_i)
}
&
\frac{
\big(\operatorname{Re}(-\alpha_i)-\operatorname{Re}(\beta_i)\big)
\big(
2\operatorname{Im}(\beta_i)\operatorname{Im}(-\alpha_i)
+|\alpha_i+\beta_i|^2
\big)
}{
\operatorname{Im}(\beta_i)\operatorname{Im}(-\alpha_i)
}
\end{bmatrix}
\otimes I_m.
\label{x_case2}
\end{align}

If $\alpha_i\in\mathbb{C}$, $\alpha_{i+1}=\overline{\alpha}_i$,
and $\beta_i,\beta_{i+1}\in\mathbb{R}$, then
\begin{align}
v_r^{(i)}
&=
\begin{bmatrix}
v_i^{\mathrm{fadi}} & v_{i+1}^{\mathrm{fadi}}
\end{bmatrix}
t_v^{(i)}
=
\begin{bmatrix}
\operatorname{Re}(v_i^{\mathrm{fadi}}) &
\operatorname{Im}(v_i^{\mathrm{fadi}})
\end{bmatrix},
\nonumber\\
x_r^{(i)}
&=
(t_v^{(i)})^{-1}
\begin{bmatrix}
-(\alpha_i+\beta_i)I_m & \mathbf{0}\\
\mathbf{0} & -(\alpha_{i+1}+\beta_{i+1})I_m
\end{bmatrix}
(t_w^{(i)})^{-*}
\nonumber\\
&=
\begin{bmatrix}
\operatorname{Re}(-\alpha_i)-\beta_i
+\operatorname{Re}(-\alpha_{i+1})-\beta_{i+1}
&
\beta_{i+1}^2
-2\operatorname{Re}(-\alpha_i)\beta_{i+1}
+|-\alpha_i|^2
\\[2ex]
\frac{
\big(\operatorname{Re}(-\alpha_i)-\beta_i\big)
\big(\operatorname{Re}(-\alpha_{i+1})-\beta_{i+1}\big)
-\operatorname{Im}(-\alpha_i)^2
}{
\operatorname{Im}(-\alpha_i)
}
&
-\frac{
\big(\operatorname{Re}(-\alpha_i)-\beta_i\big)
\big(
\beta_{i+1}^2
-2\operatorname{Re}(-\alpha_i)\beta_{i+1}
+|-\alpha_i|^2
\big)
}{
\operatorname{Im}(-\alpha_i)
}
\end{bmatrix}
\otimes I_m.
\label{x_case3}
\end{align}

If $\alpha_i,\alpha_{i+1}\in\mathbb{R}$, $\beta_i\in\mathbb{C}$,
and $\beta_{i+1}=\overline{\beta}_i$, then
\begin{align}
v_r^{(i)}
&=
\begin{bmatrix}
v_i^{\mathrm{fadi}} & v_{i+1}^{\mathrm{fadi}}
\end{bmatrix}
t_v^{(i)}
=
\begin{bmatrix}
v_i^{\mathrm{fadi}} &
(A+\alpha_{i+1}E)^{-1}Ev_i^{\mathrm{fadi}}
\end{bmatrix},
\nonumber\\
x_r^{(i)}
&=
(t_v^{(i)})^{-1}
\begin{bmatrix}
-(\alpha_i+\beta_i)I_m & \mathbf{0}\\
\mathbf{0} & -(\alpha_{i+1}+\beta_{i+1})I_m
\end{bmatrix}
(t_w^{(i)})^{-*}
\nonumber\\
&=
\begin{bmatrix}
-\alpha_i-\operatorname{Re}(\beta_i)
-\alpha_{i+1}-\operatorname{Re}(\beta_i)
&
\frac{
\big(-\alpha_i-\operatorname{Re}(\beta_i)\big)
\big(-\alpha_{i+1}-\operatorname{Re}(\beta_i)\big)
-\operatorname{Im}(\beta_i)^2
}{
\operatorname{Im}(\beta_i)
}
\\[3ex]
\alpha_{i+1}^2
+2\operatorname{Re}(\beta_i)\alpha_{i+1}
+|\beta_i|^2
&
\frac{
\big(-\alpha_i-\operatorname{Re}(\beta_i)\big)
\big(
\alpha_{i+1}^2
+2\operatorname{Re}(\beta_i)\alpha_{i+1}
+|\beta_i|^2
\big)
}{
\operatorname{Im}(\beta_i)
}
\end{bmatrix}
\otimes I_m.
\label{x_case4}
\end{align}

The factors $z_r^{(i)}$ can be obtained from
\[
z_r^{(i)}(z_r^{(i)})^\top
=
(t_p^{(i)})^{-1}
\begin{bmatrix}
\dfrac{1-|\beta_i|^2}{\prod_{j=1}^{i}|\beta_j|^2}I_m & \mathbf{0}\\
\mathbf{0} &
\dfrac{1-|\beta_{i+1}|^2}{\prod_{j=1}^{i+1}|\beta_j|^2}I_m
\end{bmatrix}
(t_p^{(i)})^{-*}.
\]
Define
\[
c_i=
\begin{cases}
1, & \beta_i\in\mathbb{R},\\
2, & \beta_i\notin\mathbb{R},
\end{cases}
\qquad
\rho_i=|\beta_i|^{-2},
\]
and
\[
\Gamma_1=1,
\qquad
\Gamma_i=\prod_{\ell=1}^{i-1}\rho_\ell^{c_\ell},
\qquad
i=2,\ldots,k.
\]
Then,
\begin{align}
z_r^{(i)}
=
\begin{cases}
\sqrt{\Gamma_i(\rho_i-1)}I_m,
& \beta_i\in\mathbb{R},
\\[4ex]
\sqrt{\Gamma_i(\rho_i-1)}
\begin{bmatrix}
\sqrt{\rho_i+1} & 0
\\[2ex]
\displaystyle
\frac{\operatorname{Re}(\beta_i)}{\operatorname{Im}(\beta_i)}
\frac{\rho_i-1}{\sqrt{\rho_i+1}}
&
\displaystyle
\frac{|1-\beta_i^2|}{|\operatorname{Im}(\beta_i)|}
\frac{1}{\sqrt{\rho_i+1}}
\end{bmatrix}
\otimes I_m,
& \beta_i\notin\mathbb{R}.
\end{cases}
\label{compute_z}
\end{align}

We are now ready to present the proposed ``Generalized LRCF-ADI Method for Stein Equations (G-LRCF-ADI)''.

\noindent\rule{\textwidth}{0.8pt}

\textbf{Algorithm: G-LRCF-ADI}

\noindent\rule{\textwidth}{0.8pt}

\textbf{Input:} Matrices of the Stein equation \eqref{lyap_p}: $(E,A,B)$; ADI shifts: $\{\alpha_i\}_{i=1}^{k}\in\mathbb{C}$; Desired poles: $\{\beta_i\}_{i=1}^{k}$ satisfying $\beta_i\neq-\alpha_i$, $|\beta_i|<1$, and $\beta_i\neq 0$; Tolerance: $1>\tau>0$.

\textbf{Output:} Approximation: $P\approx \big(V_r^{(k)}\tilde{Z}_r^{(k)}\big)\big(V_r^{(k)}\tilde{Z}_r^{(k)}\big)^\top$.

1. \textbf{Initialize:} $i=1$, $V_r^{(0)}=[\;]$, $\tilde{Z}_r^{(0)}=[\;]$, $Z_r^{(0)}=[\;]$; $T^{(0)}=[\;]$, $T_p^{(0)}=[\;]$, $L^{(0)}=[\;]$, $B_{\perp,0}^{\mathrm{fadi}}=B$, $F^{(0)}=B$, $M=I$, $\Gamma_1=1$, $A_r^{(0)}=[\;]$, $B_r^{(0)}=[\;]$.

2. \textcolor{blue}{\textbf{while} $\frac{\|F^{(i)}M^{(i)}(F^{(i)})^\top\|}{\|BB^\top\|}>\tau$ \textbf{do}}

3. Solve $\big(A + \alpha_i E)v_i^{\mathrm{fadi}}= B_{\perp,i-1}^{\mathrm{fadi}}$ for $v_i^{\mathrm{fadi}}$ and set $\rho_i = |\beta_i|^{-2}$. 

4. \textcolor{blue}{\textbf{If} $\alpha_i\in\mathbb{R}$ and $\beta_i\in\mathbb{R}$ \textbf{do}}

5. Set $x_r^{(i)}=-(\alpha_i+\beta_i)I_m$, $z_r^{(i)}=\sqrt{\Gamma_i(\rho_i-1)}I_m$, $l^{(i)}=-I_m$, and $t_p^{(i)}=I_m$.

6. Compute $t_1^{(i)}$ and $t_2^{(i)}$ from \eqref{compute_t} by passing $T^{(i-1)}$, $(\alpha_1,\cdots,\alpha_i)$, and $(\beta_1,\cdots,\beta_i)$.

7. Expand $V_r^{(i)} = \begin{bmatrix}V_r^{(i-1)}&v_i^{\mathrm{fadi}}\end{bmatrix}$, $T^{(i)}=\begin{bmatrix}T^{(i-1)}&t_1^{(i)}\\0&t_2^{(i)}\end{bmatrix}$, $T_p^{(i)} = \mathrm{blkdiag}\big(T_p^{(i-1)},t_p^{(i)}\big)$,\\ $Z_r^{(i)} = \mathrm{blkdiag}\big(Z_r^{(i-1)},z_r^{(i)}\big)$, $\tilde{Z}_r^{(i)}=\begin{bmatrix}\tilde{Z}_r^{(i-1)}&\mathbf{0}\\x_r^{(i)}(t_1^{(i)}\otimes I_m)^* T_p^{(i-1)} Z_r^{(i-1)}&t_2^{(i)}x_r^{(i)}z_r^{(i)}\end{bmatrix}$, \\ $A_r^{(i)}=\begin{bmatrix}A_r^{(i-1)}&\mathbf{0}\\-x_r^{(i)}(l^{(i)})^\top L^{(i-1)}&\beta_iI_m\end{bmatrix}$, $B_r^{(i)}=\begin{bmatrix}B_r^{(i-1)}\\x_r^{(i)}(l^{(i)})^\top\end{bmatrix}$, and $L^{(i)}=\begin{bmatrix}L^{(i-1)}&l^{(i)}\end{bmatrix}$.

8. Update $B_{\perp,i}^{\mathrm{fadi}} \gets B_{\perp,i-1}^{\mathrm{fadi}} -(\alpha_i+\beta_i) E v_i^{\mathrm{fadi}}$, $F^{(i)}\gets\begin{bmatrix}B_{\perp,i}^{\mathrm{fadi}}&EV_r^{(i)}\big(B_r^{(i)}-A_r^{(i)}\tilde{Z}_r^{(i)}(\tilde{Z}_r^{(i)})^\top(L^{(i)})^\top\big)\end{bmatrix}$,
$M^{(i)}\gets\begin{bmatrix}I_m+L^{(i)}\tilde{Z}_r^{(i)}(\tilde{Z}_r^{(i)})^\top(L^{(i)})^\top&I_m\\I_m&\mathbf{0}\end{bmatrix}$, $\Gamma_i\gets \Gamma_i\rho_i$, and $i\gets i+1$.
                
9. \textcolor{blue}{\textbf{If} $\alpha_i\in\mathbb{C}$, $\alpha_{i+1}=\overline{\alpha}_i$, $\beta_i\in\mathbb{C}$, and $\beta_{i+1}=\overline{\beta}_i$ \textbf{do}}

10. Set $x_r^{(i)}$ as in \eqref{x_case2}, $z_r^{(i)}$ as in \eqref{compute_z}, $l^{(i)}=\begin{bmatrix}-I_m&\mathbf{0}\end{bmatrix}$, $t_p^{(i)}=\frac{\beta_i}{\beta_i^2 - 1}
\begin{bmatrix}
\beta_i - \dfrac{\operatorname{Re}(\beta_i)}{|\beta_i|^2} & \dfrac{\operatorname{Im}(\beta_i)}{|\beta_i|^2} \\[8pt]
j\,\dfrac{\operatorname{Im}(\beta_i)}{|\beta_i|^2} & -\dfrac{\operatorname{Im}(\beta_i)}{|\beta_i|^2}
\end{bmatrix}
\otimes I_m$, $t_w^{(i)}=\frac{1}{\beta_i + \overline{\alpha}_i}
\begin{bmatrix}
\operatorname{Re}(\beta_i) + \overline{\alpha}_i & \operatorname{Im}(\beta_i) \\
j\,\operatorname{Im}(\beta_i) & -\operatorname{Im}(\beta_i)
\end{bmatrix}
\otimes I_m$, and $A_{22}=-x_r^{(i)}\begin{bmatrix}-\mathrm{Re}(\beta_i)I_m& \mathrm{Im}(\beta_i)I_m\\ -\mathrm{Im}(\beta_i)I_m& -\mathrm{Re}(\beta_i)I_m\end{bmatrix}^\top(x_r^{(i)})^{-1}$.

11. Compute $t_1^{(i)}$ and $t_2^{(i)}$ from \eqref{compute_t} by passing $T^{(i-1)}$, $(\alpha_1,\cdots,\alpha_{i+1})$, and $(\beta_1,\cdots,\beta_{i+1})$.

12. Expand $V_r^{(i)} = \begin{bmatrix}V_r^{(i-1)}&\mathrm{Re}(v_i^{\mathrm{fadi}})&\mathrm{Im}(v_i^{\mathrm{fadi}})\end{bmatrix}$, $T^{(i)}=\begin{bmatrix}T^{(i-1)}&t_1^{(i)}\\0&t_2^{(i)}\end{bmatrix}$, $T_p^{(i)} = \mathrm{blkdiag}\big(T_p^{(i-1)},t_p^{(i)}\big)$, $Z_r^{(i)} = \mathrm{blkdiag}\big(Z_r^{(i-1)},z_r^{(i)}\big)$, $\tilde{Z}_r^{(i)}=\begin{bmatrix}\tilde{Z}_r^{(i-1)}&\mathbf{0}\\x_r^{(i)}(t_w^{(i)})^*(t_1^{(i)}\otimes I_m)^* T_p^{(i-1)} Z_r^{(i-1)}&x_r^{(i)}(t_w^{(i)})^*(t_2^{(i)}\otimes I_m)^*t_p^{(i)}z_r^{(i)}\end{bmatrix}$,\\ $A_r^{(i)}=\begin{bmatrix}A_r^{(i-1)}&\mathbf{0}\\-x_r^{(i)}(l^{(i)})^\top L^{(i-1)}&A_{22}\end{bmatrix}$, $B_r^{(i)}=\begin{bmatrix}B_r^{(i-1)}\\x_r^{(i)}(l^{(i)})^\top\end{bmatrix}$, and $L^{(i)}=\begin{bmatrix}L^{(i-1)}&l^{(i)}\end{bmatrix}$.

13. Update $B_{\perp,i}^{\mathrm{fadi}} \gets B_{\perp,i-1}^{\mathrm{fadi}} -E \begin{bmatrix}\mathrm{Re}(v_i^{\mathrm{fadi}})&\mathrm{Im}(v_i^{\mathrm{fadi}})\end{bmatrix}x_r^{(i)}(l^{(i)})^\top$,\\ $F^{(i)}\gets\begin{bmatrix}B_{\perp,i}^{\mathrm{fadi}}&EV_r^{(i)}\big(B_r^{(i)}-A_r^{(i)}\tilde{Z}_r^{(i)}(\tilde{Z}_r^{(i)})^\top(L^{(i)})^\top\big)\end{bmatrix}$,
$M^{(i)}\gets\begin{bmatrix}I_m+L^{(i)}\tilde{Z}_r^{(i)}(\tilde{Z}_r^{(i)})^\top(L^{(i)})^\top&I_m\\I_m&\mathbf{0}\end{bmatrix}$,\\ $\Gamma_i\gets \Gamma_i\rho_i^2$, and $i\gets i+2$.

14. \textcolor{blue}{\textbf{If} $\alpha_i\in\mathbb{C}$, $\alpha_{i+1}=\overline{\alpha}_i$, $\beta_i\in\mathbb{R}$, and $\beta_{i+1}\in\mathbb{R}$ \textbf{do}}

15. Set $x_r^{(i)}$ as in \eqref{x_case3}, $z_r^{(i)}=\sqrt{\Gamma_i(\rho_i-1)}I_m$, $\rho_{i+1} = |\beta_{i+1}|^{-2}$, $\Gamma_{i+1}= \Gamma_i\rho_{i}$, $z_r^{(i+1)}=\sqrt{\Gamma_{i+1}(\rho_{i+1}-1)}I_m$, $\hat{z}_r=\mathrm{blkdiag}(z_r^{(i)},z_r^{(i+1)})$, $l^{(i)}=\begin{bmatrix}-I_m&\mathbf{0}\end{bmatrix}$, $t_p^{(i)}=\begin{bmatrix}1 & 0\\
0& 1\end{bmatrix}\otimes I_m$, $t_w^{(i)}= \begin{bmatrix}1& \dfrac{1}{(\beta_{i+1} + \overline{\alpha}_i)}\\
            0& -\dfrac{1}{(\beta_{i+1} + \overline{\alpha}_i)}\end{bmatrix}\otimes I_m$, and $A_{22}=-x_r^{(i)}\begin{bmatrix}-\beta_iI_m& I_m\\ \mathbf{0}& -\beta_{i+1}I_m\end{bmatrix}^\top(x_r^{(i)})^{-1}$.
            
16. Compute $t_1^{(i)}$ and $t_2^{(i)}$ from \eqref{compute_t} by passing $T^{(i-1)}$, $(\alpha_1,\cdots,\alpha_{i+1})$, and $(\beta_1,\cdots,\beta_{i+1})$.

17. Expand $V_r^{(i)} = \begin{bmatrix}V_r^{(i-1)}&\mathrm{Re}(v_i^{\mathrm{fadi}})&\mathrm{Im}(v_i^{\mathrm{fadi}})\end{bmatrix}$, $T^{(i)}=\begin{bmatrix}T^{(i-1)}&t_1^{(i)}\\0&t_2^{(i)}\end{bmatrix}$, $T_p^{(i)} = \mathrm{blkdiag}\big(T_p^{(i-1)},t_p^{(i)}\big)$, $Z_r^{(i)} = \mathrm{blkdiag}\big(Z_r^{(i-1)},\hat{z}_r\big)$, $\tilde{Z}_r^{(i)}=\begin{bmatrix}\tilde{Z}_r^{(i-1)}&\mathbf{0}\\x_r^{(i)}(t_w^{(i)})^*(t_1^{(i)}\otimes I_m)^* T_p^{(i-1)} Z_r^{(i-1)}&x_r^{(i)}(t_w^{(i)})^*(t_2^{(i)}\otimes I_m)^*t_p^{(i)}\hat{z}_r\end{bmatrix}$,\\ $A_r^{(i)}=\begin{bmatrix}A_r^{(i-1)}&\mathbf{0}\\-x_r^{(i)}(l^{(i)})^\top L^{(i-1)}&A_{22}\end{bmatrix}$, $B_r^{(i)}=\begin{bmatrix}B_r^{(i-1)}\\x_r^{(i)}(l^{(i)})^\top\end{bmatrix}$, and $L^{(i)}=\begin{bmatrix}L^{(i-1)}&l^{(i)}\end{bmatrix}$.

18. Update $B_{\perp,i}^{\mathrm{fadi}} \gets B_{\perp,i-1}^{\mathrm{fadi}} -E \begin{bmatrix}\mathrm{Re}(v_i^{\mathrm{fadi}})&\mathrm{Im}(v_i^{\mathrm{fadi}})\end{bmatrix}x_r^{(i)}(l^{(i)})^\top$,\\ $F^{(i)}\gets\begin{bmatrix}B_{\perp,i}^{\mathrm{fadi}}&EV_r^{(i)}\big(B_r^{(i)}-A_r^{(i)}\tilde{Z}_r^{(i)}(\tilde{Z}_r^{(i)})^\top(L^{(i)})^\top\big)\end{bmatrix}$,
$M^{(i)}\gets\begin{bmatrix}I_m+L^{(i)}\tilde{Z}_r^{(i)}(\tilde{Z}_r^{(i)})^\top(L^{(i)})^\top&I_m\\I_m&\mathbf{0}\end{bmatrix}$,\\ $\Gamma_i\gets \Gamma_{i+1}\rho_{i+1}$, and $i\gets i+2$.

19. \textcolor{blue}{\textbf{If} $\alpha_i\in\mathbb{R}$, $\alpha_{i+1}\in\mathbb{R}$, $\beta_i\in\mathbb{C}$, and $\beta_{i+1}=\overline{\beta}_i$ \textbf{do}}

20. Solve $\big(A + \alpha_{i+1} E)v_{r}^{i+1}= Ev_i^{\mathrm{fadi}}$ for $v_{r}^{i+1}$.

21. Set $x_r^{(i)}$ as in \eqref{x_case4}, $z_r^{(i)}$ as in \eqref{compute_z}, $l^{(i)}=\begin{bmatrix}-I_m&\mathbf{0}\end{bmatrix}$, $t_p^{(i)}=\frac{\beta_i}{\beta_i^2 - 1}
\begin{bmatrix}
\beta_i - \dfrac{\operatorname{Re}(\beta_i)}{|\beta_i|^2} & \dfrac{\operatorname{Im}(\beta_i)}{|\beta_i|^2} \\[8pt]
j\,\dfrac{\operatorname{Im}(\beta_i)}{|\beta_i|^2} & -\dfrac{\operatorname{Im}(\beta_i)}{|\beta_i|^2}
\end{bmatrix}
\otimes I_m$, $t_w^{(i)}=\frac{1}{\beta_i + \overline{\alpha}_i}
\begin{bmatrix}
\operatorname{Re}(\beta_i) + \overline{\alpha}_i & \operatorname{Im}(\beta_i) \\
j\,\operatorname{Im}(\beta_i) & -\operatorname{Im}(\beta_i)
\end{bmatrix}
\otimes I_m$, and $A_{22}=-x_r^{(i)}\begin{bmatrix}-\mathrm{Re}(\beta_i)I_m& \mathrm{Im}(\beta_i)I_m\\ -\mathrm{Im}(\beta_i)I_m& -\mathrm{Re}(\beta_i)I_m\end{bmatrix}^\top(x_r^{(i)})^{-1}$.

22. Compute $t_1^{(i)}$ and $t_2^{(i)}$ from \eqref{compute_t} by passing $T^{(i-1)}$, $(\alpha_1,\cdots,\alpha_{i+1})$, and $(\beta_1,\cdots,\beta_{i+1})$.

23. Expand $V_r^{(i)} = \begin{bmatrix}V_r^{(i-1)}&v_i^{\mathrm{fadi}}&v_{r}^{i+1}\end{bmatrix}$, $T^{(i)}=\begin{bmatrix}T^{(i-1)}&t_1^{(i)}\\0&t_2^{(i)}\end{bmatrix}$, $T_p^{(i)} = \mathrm{blkdiag}\big(T_p^{(i-1)},t_p^{(i)}\big)$,\\ $Z_r^{(i)} = \mathrm{blkdiag}\big(Z_r^{(i-1)},z_r^{(i)}\big)$, $\tilde{Z}_r^{(i)}=\begin{bmatrix}\tilde{Z}_r^{(i-1)}&\mathbf{0}\\x_r^{(i)}(t_w^{(i)})^*(t_1^{(i)}\otimes I_m)^* T_p^{(i-1)} Z_r^{(i-1)}&x_r^{(i)}(t_w^{(i)})^*(t_2^{(i)}\otimes I_m)^*t_p^{(i)}z_r^{(i)}\end{bmatrix}$,\\$A_r^{(i)}=\begin{bmatrix}A_r^{(i-1)}&\mathbf{0}\\-x_r^{(i)}(l^{(i)})^\top L^{(i-1)}&A_{22}\end{bmatrix}$, $B_r^{(i)}=\begin{bmatrix}B_r^{(i-1)}\\x_r^{(i)}(l^{(i)})^\top\end{bmatrix}$, and $L^{(i)}=\begin{bmatrix}L^{(i-1)}&l^{(i)}\end{bmatrix}$.

24. Update $B_{\perp,i}^{\mathrm{fadi}} \gets B_{\perp,i-1}^{\mathrm{fadi}} -E \begin{bmatrix}v_i^{\mathrm{fadi}}&v_{r}^{i+1}\end{bmatrix}x_r^{(i)}(l^{(i)})^\top$,\\ $F^{(i)}\gets\begin{bmatrix}B_{\perp,i}^{\mathrm{fadi}}&EV_r^{(i)}\big(B_r^{(i)}-A_r^{(i)}\tilde{Z}_r^{(i)}(\tilde{Z}_r^{(i)})^\top(L^{(i)})^\top\big)\end{bmatrix}$,
$M^{(i)}\gets\begin{bmatrix}I_m+L^{(i)}\tilde{Z}_r^{(i)}(\tilde{Z}_r^{(i)})^\top(L^{(i)})^\top&I_m\\I_m&\mathbf{0}\end{bmatrix}$,\\ $\Gamma_i\gets \Gamma_i\rho_i^2$, and $i\gets i+2$.

\noindent\rule{\textwidth}{0.8pt}
\subsection{Applications}

This subsection presents two applications of the G-LRCF-ADI method for which the ADI shifts $\alpha_i$ must lie on the unit circle, i.e., $|\alpha_i|=1$. Since the standard LRCF-ADI method requires $|\alpha_i|>1$, it cannot be used for these applications.

\subsubsection{Approximation of Frequency-Limited Gramians}

The frequency-limited controllability Gramian $P_\Omega$ over the frequency interval $\Omega=[-\omega_f,\omega_f]$ rad/sec, with $\omega_f<\pi$, satisfies the Stein equation
\begin{align}
AP_\Omega A^\top-E P_\Omega E^\top
+EF_\Omega(E,A)E^{-1}BB^\top
+BB^\top E^{-\top}F_\Omega(E,A)^\top E^\top
=0,
\end{align}
where
\begin{align}
F_\Omega(E,A)
&=
\frac{1}{4\pi}
\int_{-\omega_f}^{\omega_f}
\big(I-e^{-j\nu}E^{-1}A\big)^{-1}
\big(I+e^{-j\nu}E^{-1}A\big)
\,d\nu
\nonumber\\
&=
\frac{1}{2\pi}
\mathrm{Re}\Big(
\omega_f I
-2j\mathrm{ln}\big(I-e^{-j\omega_f}E^{-1}A\big)
\Big);
\end{align}
see \cite{gawronski1990model,petersson2013nonlinear}.

The Gramian $P_\Omega$ also admits the integral representation
\begin{align}
P_\Omega
=
\frac{1}{2\pi}
\int_{-\omega_f}^{\omega_f}
\big(e^{j\nu}E-A\big)^{-1}
BB^\top
\big(e^{j\nu}E-A\big)^{-*}
\,d\nu.
\label{fl_int}
\end{align}
Moreover, $P_\Omega$ and the standard controllability Gramian $P$ are related by
\[
P_\Omega
=
F_\Omega(E,A)P
+
PF_\Omega(E,A)^\top.
\]

By choosing the ADI shifts as
\[
\alpha_i=-e^{j\omega_i},
\qquad
\omega_i\in\Omega,
\]
the G-LRCF-ADI method can be used to perform numerical integration of \eqref{fl_int}. In numerical integration, the integrand is replaced by an interpolant at selected nodes and the interpolant is integrated instead of the original integrand. Here, the shifts define interpolation nodes on the unit circle. Since the G-LRCF-ADI method allows shifts on the unit circle, unlike the standard LRCF-ADI method, it can approximate the integrand in \eqref{fl_int} by the reduced-order interpolant. The poles $\beta_i$ can still be selected using the procedure in Subsection \ref{sub1}.

Consequently, the frequency-limited Gramian can be approximated as
\begin{align}
P_\Omega\approx \hat{P}_\Omega^{(k)}
&=V_r^{(k)}\Bigg(\frac{1}{2\pi}\int_{-\omega_f}^{\omega_f}
\big(e^{j\nu}I-A_r^{(k)}\big)^{-1}
B_r^{(k)}(B_r^{(k)})^\top
\big(e^{j\nu}I-A_r^{(k)}\big)^{-*}d\nu\Bigg)
(V_r^{(k)})^\top\nonumber\\
&=V_r^{(k)}\Big(
F_\Omega(I,A_r^{(k)})Z_r^{(k)}(Z_r^{(k)})^\top
+Z_r^{(k)}(Z_r^{(k)})^\top F_\Omega(I,A_r^{(k)})^\top
\Big)(V_r^{(k)})^\top\nonumber\\
&=V_r^{(k)}\tilde{P}_\Omega^{(k)}(V_r^{(k)})^\top,
\end{align}
where $\tilde{P}_\Omega^{(k)}$ is the frequency-limited controllability Gramian of the reduced pair $(A_r^{(k)},B_r^{(k)})$. Since $(A_r^{(k)},B_r^{(k)})$ is stable and controllable for $|\beta_i|<1$ and $\beta_i\neq0$, $\tilde{P}_\Omega^{(k)}$ is positive definite and admits a Cholesky factorization. This is useful for frequency-limited balanced truncation (FLBT).

An additional advantage of the G-LRCF-ADI method is that it avoids computing $F_\Omega(E,A)$, which is not feasible in large-scale settings. Its interpolatory properties instead provide the approximation
\begin{align}
F_\Omega(E,A)E^{-1}B
\approx
G_\Omega^{(k)}
=
V_r^{(k)}
F_\Omega(I,A_r^{(k)})
B_r^{(k)}.
\end{align}

Since $A_r^{(k)}$ has the lower block-triangular form
\[
A_r^{(k)}
=
\begin{bmatrix}
a^{(k-1)} & \mathbf{0}\\
a_{21}^{(k)} & a_{22}^{(k)}
\end{bmatrix}
\otimes I_m,
\]
$F_\Omega(I,A_r^{(k)})$ also has a lower block-triangular form:
\[
F_\Omega(I,A_r^{(k)})
=
\begin{bmatrix}
F_\Omega(I,a^{(k-1)}) & \mathbf{0}\\
f_{21}^{(k)} & F_\Omega(I,a_{22}^{(k)})
\end{bmatrix}
\otimes I_m;
\]
see \cite{petersson2014model}. Furthermore, the commutativity relation
\[
A_r^{(k)}F_\Omega(I,A_r^{(k)})
=
F_\Omega(I,A_r^{(k)})A_r^{(k)}
\]
implies that $f_{21}^{(k)}$ satisfies
\begin{align}
a_{22}^{(k)}f_{21}^{(k)}
-f_{21}^{(k)}a^{(k-1)}
+a_{21}^{(k)}F_\Omega(I,a^{(k-1)})
-F_\Omega(I,a_{22}^{(k)})a_{21}^{(k)}
=0;
\end{align}
see \cite{zulfiqar2022adaptive}. This Sylvester equation is inexpensive to solve because $a^{(k-1)}$ and $a_{22}^{(k)}$ are small. Since $F_\Omega(I,a_{22}^{(k)})$ can also be evaluated cheaply, $F_\Omega(I,A_r^{(k)})$ can be computed recursively.

Once the relative change in $G_\Omega^{(k)}$ stagnates, namely,
\[
\frac{\|G_\Omega^{(k)}-G_\Omega^{(k-1)}\|}
{\|G_\Omega^{(k-1)}\|}
<\tau,
\]
the residual
\begin{align}
R_\Omega^{(k)}
&=
A\hat{P}_\Omega^{(k)}A^\top
-E\hat{P}_\Omega^{(k)}E^\top
+EG_\Omega^{(k)}B^\top
+B(G_\Omega^{(k)})^\top E^\top
\nonumber\\
&=
\begin{bmatrix}
AV_r^{(k)} & EV_r^{(k)} & B & EG_\Omega^{(k)}
\end{bmatrix}
\begin{bmatrix}
\tilde{P}_\Omega^{(k)} & 0 & 0 & 0\\
0 & -\tilde{P}_\Omega^{(k)} & 0 & 0\\
0 & 0 & 0 & I\\
0 & 0 & I & 0
\end{bmatrix}
\begin{bmatrix}
AV_r^{(k)} & EV_r^{(k)} & B & EG_\Omega^{(k)}
\end{bmatrix}^\top
\nonumber\\
&=
F_\Omega^{(k)}M_\Omega^{(k)}(F_\Omega^{(k)})^\top
\end{align}
can be used to assess the accuracy of $\hat{P}_\Omega^{(k)}$.
\subsubsection{Data-Driven Balanced Truncation}

It is shown in \cite{zulfiqar2026new} that, for $\sigma_i=-\alpha_i$ and distinct shifts $\{\alpha_i\}_{i=1}^{k}$, $V_{\mathrm{kry}}$ and $V_{\mathrm{fadi}}^{(k)}$ are related by
\[
V_{\mathrm{fadi}}^{(k)}
=
V_{\mathrm{kry}}T_{v,\mathrm{kry}}^{(k)},
\]
where
\begin{equation}
\label{Tv_kry}
T_{v,\mathrm{kry}}^{(k)}(j,i)
=
\begin{cases}
(-1)^{i}
\dfrac{
\prod_{q=1}^{i-1}(\alpha_j+\beta_q)
}{
\prod_{\substack{q=1\\q\neq j}}^{i}(\alpha_q-\alpha_j)
},
& 1\leq j\leq i,
\\[10pt]
0,
& j>i,
\end{cases}
\otimes I_m.
\end{equation}

Let $\{\hat{\alpha}_i\}_{i=1}^k$ and $\{\hat{\beta}_i\}_{i=1}^k$ denote the ADI shifts used to approximate
\[
Q\approx
\big(W_r^{(k)}Y_r^{(k)}\big)
\big(W_r^{(k)}Y_r^{(k)}\big)^\top
\]
using G-LRCF-ADI, where the shifts $\hat{\alpha}_i$ are distinct. Define
\[
W_{\mathrm{kry}}
=
\begin{bmatrix}
(-\overline{\hat{\alpha}}_1E^\top-A^\top)^{-1}C^\top
&
\cdots
&
(-\overline{\hat{\alpha}}_kE^\top-A^\top)^{-1}C^\top
\end{bmatrix}.
\]
Next, define $T_{w,\mathrm{kry}}^{(k)}$ by substituting
\[
\alpha_i=\overline{\hat{\alpha}}_i,
\qquad
\beta_i=\overline{\hat{\beta}}_i
\]
in \eqref{Tv_kry}. Similarly, define $\hat{T}_w^{(k)}$ by making the same substitutions in the expression for $T_v^{(k)}$.

The G-LRCF-ADI-based low-rank BT method replaces the exact Cholesky factors $Z_p$ and $Z_q$ of $P$ and $Q$ in the BSA by the approximations
\[
Z_p
\approx
V_{\mathrm{kry}}
T_{v,\mathrm{kry}}^{(k)}
T_v^{(k)}
\tilde{Z}_r^{(k)}
\]
and
\[
Z_q
\approx
W_{\mathrm{kry}}
T_{w,\mathrm{kry}}^{(k)}
\hat{T}_w^{(k)}
\tilde{Y}_r^{(k)},
\]
respectively.

Compute the singular value decomposition
\[
(\tilde{Y}_r^{(k)})^\top
(\hat{T}_w^{(k)})^*
(T_{w,\mathrm{kry}}^{(k)})^*
(W_{\mathrm{kry}})^*
EV_{\mathrm{kry}}
T_{v,\mathrm{kry}}^{(k)}
T_v^{(k)}
\tilde{Z}_r^{(k)}
=
\begin{bmatrix}
\tilde{U}_q & \tilde{U}_2
\end{bmatrix}
\begin{bmatrix}
\tilde{\Sigma}_q & \mathbf{0}\\
\mathbf{0} & \tilde{\Sigma}_2
\end{bmatrix}
\begin{bmatrix}
\tilde{V}_q^\top\\
\tilde{V}_2^\top
\end{bmatrix}.
\]
The projection matrices with $q$ columns are then given by
\begin{align}
W_q
&=
W_{\mathrm{kry}}
T_{w,\mathrm{kry}}^{(k)}
\hat{T}_w^{(k)}
\tilde{Y}_r^{(k)}
\tilde{U}_q
\tilde{\Sigma}_q^{-1/2},
\nonumber\\
V_q
&=
V_{\mathrm{kry}}
T_{v,\mathrm{kry}}^{(k)}
T_v^{(k)}
\tilde{Z}_r^{(k)}
\tilde{V}_q
\tilde{\Sigma}_q^{-1/2}.
\label{bsa_proj}
\end{align}

The $q$th-order ROM obtained by low-rank BT is
\begin{align}
\tilde{A}_q
&=
\tilde{\Sigma}_q^{-1/2}
(\tilde{U}_q)^\top
(\tilde{Y}_r^{(k)})^\top
(\hat{T}_w^{(k)})^*
(T_{w,\mathrm{kry}}^{(k)})^*
(W_{\mathrm{kry}})^*
AV_{\mathrm{kry}}
T_{v,\mathrm{kry}}^{(k)}
T_v^{(k)}
\tilde{Z}_r^{(k)}
\tilde{V}_q
\tilde{\Sigma}_q^{-1/2},
\nonumber\\
\tilde{B}_q
&=
\tilde{\Sigma}_q^{-1/2}
(\tilde{U}_q)^\top
(\tilde{Y}_r^{(k)})^\top
(\hat{T}_w^{(k)})^*
(T_{w,\mathrm{kry}}^{(k)})^*
(W_{\mathrm{kry}})^*
B,
\nonumber\\
\tilde{C}_q
&=
CV_{\mathrm{kry}}
T_{v,\mathrm{kry}}^{(k)}
T_v^{(k)}
\tilde{Z}_r^{(k)}
\tilde{V}_q
\tilde{\Sigma}_q^{-1/2}.
\end{align}

When $\alpha_i$ and $\hat{\alpha}_i$ lie on the unit circle, the terms
$(W_{\mathrm{kry}})^*EV_{\mathrm{kry}}$,
$(W_{\mathrm{kry}})^*AV_{\mathrm{kry}}$,
$(W_{\mathrm{kry}})^*B$, and
$CV_{\mathrm{kry}}$
can be obtained experimentally from frequency-response samples
$G(e^{j\omega_i})$. In particular,
\begin{align}
\tilde{E}(j,i)
&=
(W_{\mathrm{kry}}(:,i))^*
EV_{\mathrm{kry}}(:,j)
=
\frac{
G(-\hat{\alpha}_i)-G(-\alpha_j)
}{
\hat{\alpha}_i-\alpha_j
},
\nonumber\\
\tilde{A}(j,i)
&=
(W_{\mathrm{kry}}(:,i))^*
AV_{\mathrm{kry}}(:,j)
=
\frac{
\alpha_jG(-\alpha_j)
-\hat{\alpha}_iG(-\hat{\alpha}_i)
}{
\hat{\alpha}_i-\alpha_j
},
\nonumber\\
\tilde{B}(j,:)
&=
(W_{\mathrm{kry}}(:,j))^*B
=
G(-\hat{\alpha}_j),
\nonumber\\
\tilde{C}(:,i)
&=
CV_{\mathrm{kry}}(:,i)
=
G(-\alpha_i).
\end{align}
All remaining terms are computed analytically from the ADI shifts
$(\alpha_i,\beta_i)$ and $(\hat{\alpha}_i,\hat{\beta}_i)$. Therefore, the ROM
\[
\tilde{G}_q(z)
=
\tilde{C}_q(zI-\tilde{A}_q)^{-1}\tilde{B}_q
\]
can be computed non-intrusively from samples of $G(z)$ on the unit circle. FLBT can be implemented similarly using frequency-response data and is omitted here for brevity.
\section{Numerical Results}

This section evaluates the numerical performance of the G-LRCF-ADI method using large-scale state-space models. The data-driven implementations of BT and FLBT based on G-LRCF-ADI are also tested. MATLAB codes for reproducing the results in this section are publicly available at \cite{mycodes}. All experiments are performed in MATLAB R2025b on a Windows 11 laptop with 32 GB RAM and an Intel(R) Core(TM) Ultra 9 285H processor running at 2.9 GHz.

\subsection{Model Description\label{sub2}}

The large-scale state-space models used in this section are constructed as follows. First, generate $n_r$ distinct real poles $\eta_i$ and $2n_c$ distinct complex poles $\zeta_i$, forming $n_c$ complex-conjugate pole pairs. Next, construct arbitrary matrices
\[
B_1=
\begin{bmatrix}
b_1\\
\vdots\\
b_{n_c}
\end{bmatrix}
\in\mathbb{R}^{2n_c\times m},
\qquad
C_1=
\begin{bmatrix}
c_1 & \cdots & c_{n_c}
\end{bmatrix}
\in\mathbb{R}^{p\times 2n_c}.
\]

Assume that the first $2n_d$ poles, corresponding to $n_d$ complex-conjugate pairs, are dominant. The matrices $B_1$ and $C_1$ are scaled as
\[
B_1=
\begin{bmatrix}
\sqrt{10\frac{1-|\zeta_1|^2}{\|c_1b_1\|_2}}b_1\\
\vdots\\
\sqrt{10\frac{1-|\zeta_{n_d}|^2}{\|c_{n_d}b_{n_d}\|_2}}b_{n_d}\\
\sqrt{0.1\frac{1-|\zeta_{n_d+1}|^2}{\|c_{n_d+1}b_{n_d+1}\|_2}}b_{n_d+1}\\
\vdots\\
\sqrt{0.1\frac{1-|\zeta_{n_c}|^2}{\|c_{n_c}b_{n_c}\|_2}}b_{n_c}
\end{bmatrix},\quad
\text{and}\quad
C_1=
\begin{bmatrix}
\sqrt{10\frac{1-|\zeta_1|^2}{\|c_1b_1\|_2}}c_1^\top\\
\vdots\\
\sqrt{10\frac{1-|\zeta_{n_d}|^2}{\|c_{n_d}b_{n_d}\|_2}}c_{n_d}^\top\\
\sqrt{0.1\frac{1-|\zeta_{n_d+1}|^2}{\|c_{n_d+1}b_{n_d+1}\|_2}}c_{n_d+1}^\top\\
\vdots\\
\sqrt{0.1\frac{1-|\zeta_{n_c}|^2}{\|c_{n_c}b_{n_c}\|_2}}c_{n_c}^\top
\end{bmatrix}^\top.
\]

Next, define
\[
B_2=
0.001
\left(
\min_{i=1,\ldots,n_r}|\eta_i|
\right)^2
\mathbf{1}_{n_r\times m},
\qquad
C_2=
0.001
\left(
\min_{i=1,\ldots,n_r}|\eta_i|
\right)^2
\mathbf{1}_{p\times n_r}.
\]
The matrices associated with the complex and real poles are defined by
\[
A_1=
\mathrm{blkdiag}\left(
\begin{bmatrix}
\mathrm{Re}(\zeta_1) & \mathrm{Im}(\zeta_1)\\
-\mathrm{Im}(\zeta_1) & \mathrm{Re}(\zeta_1)
\end{bmatrix},
\ldots,
\begin{bmatrix}
\mathrm{Re}(\zeta_{n_c}) & \mathrm{Im}(\zeta_{n_c})\\
-\mathrm{Im}(\zeta_{n_c}) & \mathrm{Re}(\zeta_{n_c})
\end{bmatrix}
\right),\quad
\text{and}\quad
A_2=\mathrm{diag}(\eta_1,\ldots,\eta_{n_r}).
\]

Define
\[
\mathcal{P}
=
\operatorname{tridiag}
\left(
\frac{1}{2},\,1,\,\frac{1}{3}
\right)
\in\mathbb{R}^{(n_r+2n_c)\times(n_r+2n_c)}\quad
\text{and}\quad
\mathcal{Q}
=
\operatorname{tridiag}
\left(
\frac{1}{4},\,1,\,\frac{1}{6}
\right)
\in\mathbb{R}^{(n_r+2n_c)\times(n_r+2n_c)}.
\]
A state-space realization of order $n=n_r+2n_c$, with $m$ inputs and $p$ outputs, is then constructed as
\[
E=\mathcal{Q}\mathcal{P},
\qquad
A=\mathcal{Q}\mathrm{blkdiag}(A_1,A_2)\mathcal{P},
\qquad
B=\mathcal{Q}
\begin{bmatrix}
B_1\\
B_2
\end{bmatrix},
\qquad
C=
\begin{bmatrix}
C_1 & C_2
\end{bmatrix}
\mathcal{P}.
\]

The matrices $E$ and $A$ are sparse by construction. The real poles are chosen to be nearly uncontrollable and nearly unobservable, whereas the first $2n_d$ complex poles are the most controllable and observable poles. By varying the number of dominant poles, the numerical ranks of $P$ and $Q$, as well as the number of significant Hankel singular values, can be controlled.

The model order, pole locations, and numbers of inputs and outputs can all be selected by the user. Thus, this construction provides large-scale benchmark models with prescribed properties for testing MOR algorithms and low-rank matrix-equation solvers. An additional advantage is that the poles are known by construction, even when computing the eigenvalue decomposition of $E^{-1}A$ is infeasible.
\subsection{Example 1: Automatic Shift Generation}

This example evaluates the automatic shift-generation method proposed in Subsection \ref{sub1}. The matrices in the Stein equation \eqref{lyap_p} are generated using the procedure described in Subsection \ref{sub2}. The matrix dimensions are
\[
E\in\mathbb{R}^{10^7\times 10^7},
\qquad
A\in\mathbb{R}^{10^7\times 10^7},
\qquad
B\in\mathbb{R}^{10^7\times 2}.
\]
The parameters are set to $n_c=25$ and $n_d=10$. Hence, the numerical rank of $P$ is expected to be approximately $20$, making the LRCF-ADI method suitable for this problem. All poles lie inside the unit circle, ensuring that the Stein equation \eqref{lyap_p} has a unique solution. The pole locations are shown in Figure \ref{fig1}.

\begin{figure}[!h]
  \centering
  \includegraphics[width=10cm]{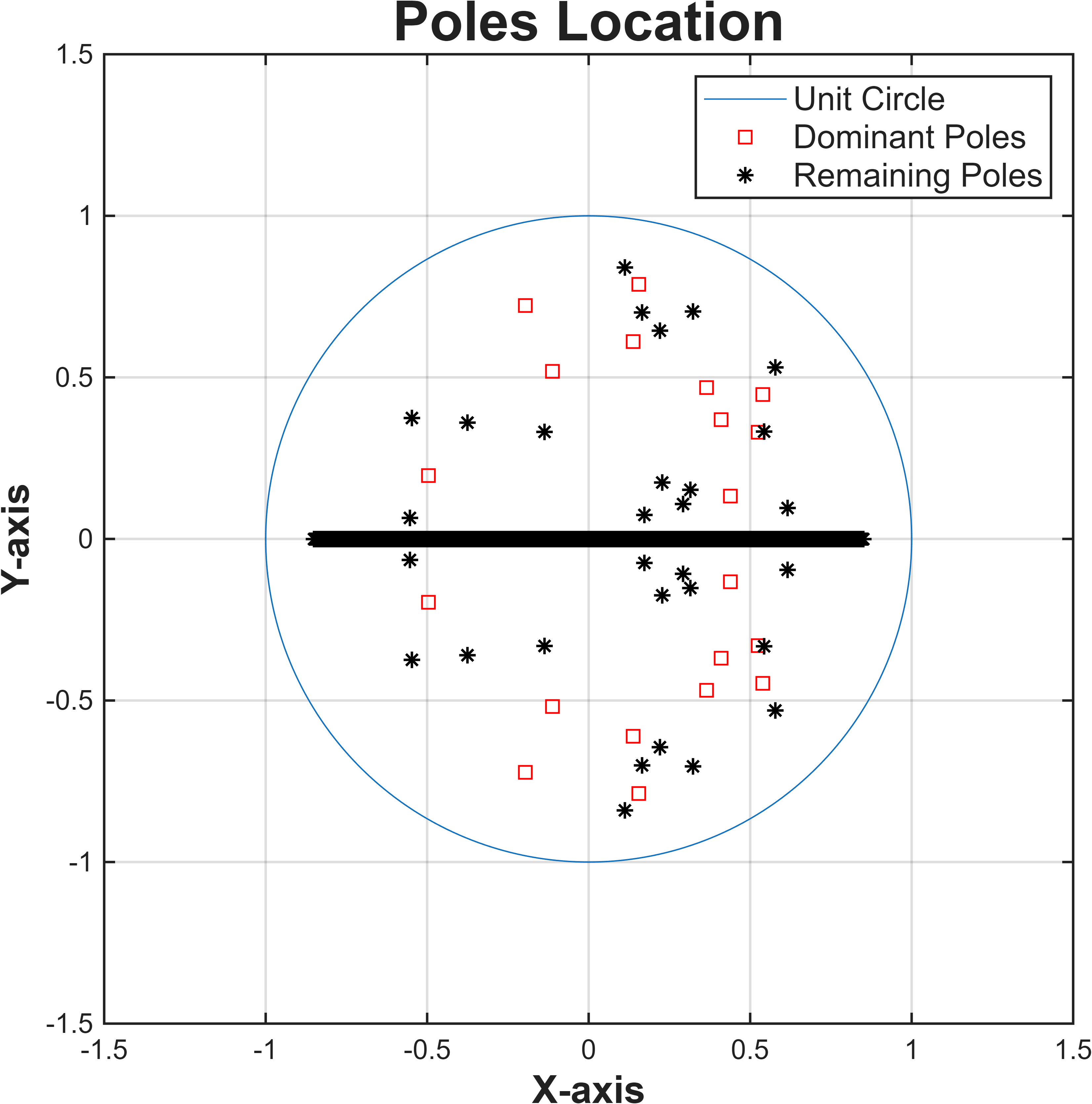}
  \caption{Eigenvalues of $E^{-1}A$}
  \label{fig1}
\end{figure}

The LRCF-ADI method is considered converged when
\[
\frac{
\|AV_{\mathrm{fadi}}^{(k)}Z_{\mathrm{fadi}}^{(k)}(Z_{\mathrm{fadi}}^{(k)})^\top (V_{\mathrm{fadi}}^{(k)})^\top A^\top
-EV_{\mathrm{fadi}}^{(k)}Z_{\mathrm{fadi}}^{(k)}(Z_{\mathrm{fadi}}^{(k)})^\top (V_{\mathrm{fadi}}^{(k)})^\top E^\top
+BB^\top\|_2
}{
\|BB^\top\|_2
}
<10^{-10}.
\]
First, the shifts are selected as
\[
\alpha_i=-\frac{1}{\overline{\zeta}_i},
\]
and the resulting performance is compared with that of the proposed automatic shift-generation method, which aims to estimate these poles. For the automatic method, the initial shift is set to $\alpha_1=2$. The decay of the normalized spectral-norm residual is shown in Figure \ref{fig2}.

\begin{figure}[!h]
  \centering
  \includegraphics[width=10cm]{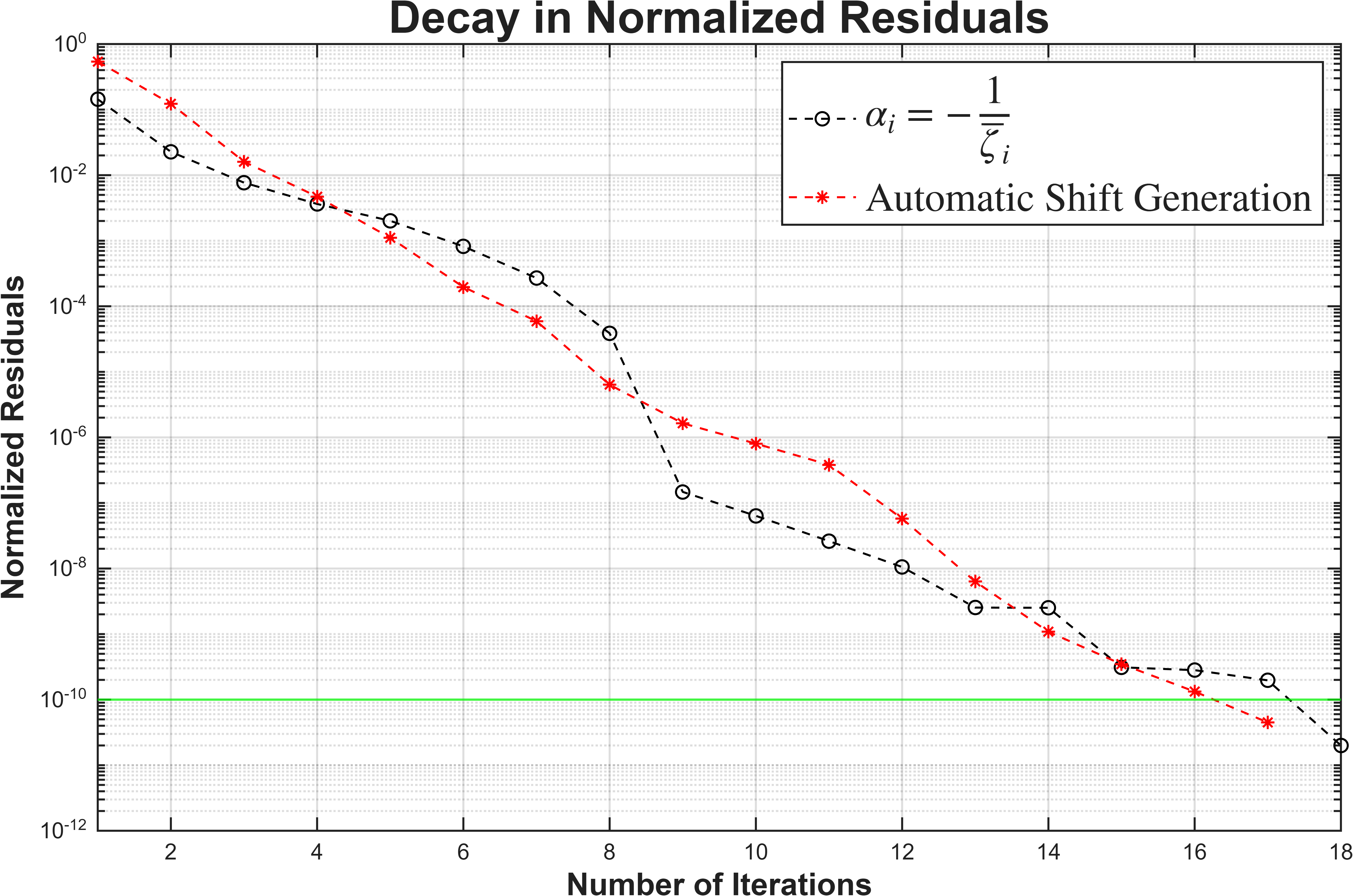}
  \caption{Decay of the normalized residual}
  \label{fig2}
\end{figure}

The proposed automatic shift-generation method provides an accurate approximation of $P$. The LRCF-ADI method requires $75.2560$ seconds when using automatic shifts, compared with $50.5407$ seconds when using the shifts $\alpha_i=-1/\overline{\zeta}_i$. Figure \ref{fig3} compares the eigenvalues of $\tilde{A}^{(k)}$ in the projected Stein equation \eqref{proj_lyap}.

\begin{figure}[!h]
  \centering
  \includegraphics[width=10cm]{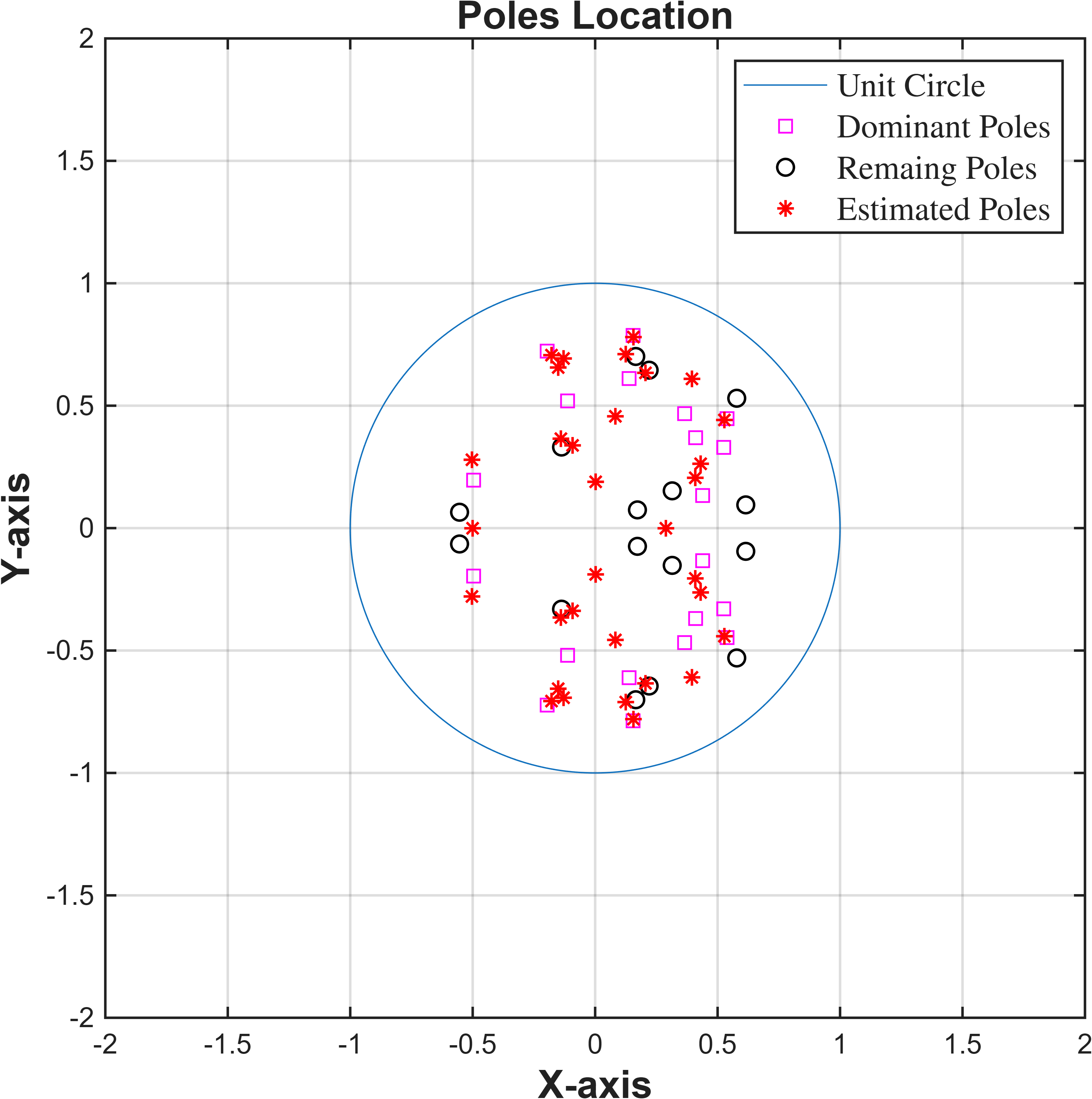}
  \caption{Pole locations of $\tilde{A}^{(k)}$}
  \label{fig3}
\end{figure}

Figure \ref{fig3} shows that the proposed automatic shift-generation method estimates several poles $\zeta_i$ closely.
\subsection{Example 2: $\mathcal{H}_2$ Pseudo-Optimality}

Consider the model from the previous example. In this example, the shifts in G-LRCF-ADI are selected as
\[
\{\alpha_i\}_{i=1}^{k}
=
\left\{
-\frac{1}{\overline{\zeta}_i}
\right\}_{i=1}^{k},
\qquad
\{\beta_i\}_{i=1}^{k}
=
\{\zeta_{k-i+1}\}_{i=1}^{k}.
\]
Thus, G-LRCF-ADI does not perform $\mathcal{H}_2$-pseudo-optimal MOR at every iteration. However, at the final iteration, the shifts enforce $\mathcal{H}_2$ pseudo-optimality. Therefore, the final approximation must coincide with that obtained by the LRCF-ADI method.

Figure \ref{fig4} shows the decay of the normalized spectral-norm residual. Although G-LRCF-ADI behaves differently from LRCF-ADI during the intermediate iterations, the two methods coincide at the final iteration, as predicted by the theory. G-LRCF-ADI converges in $87.5413$ seconds.

\begin{figure}[!h]
  \centering
  \includegraphics[width=10cm]{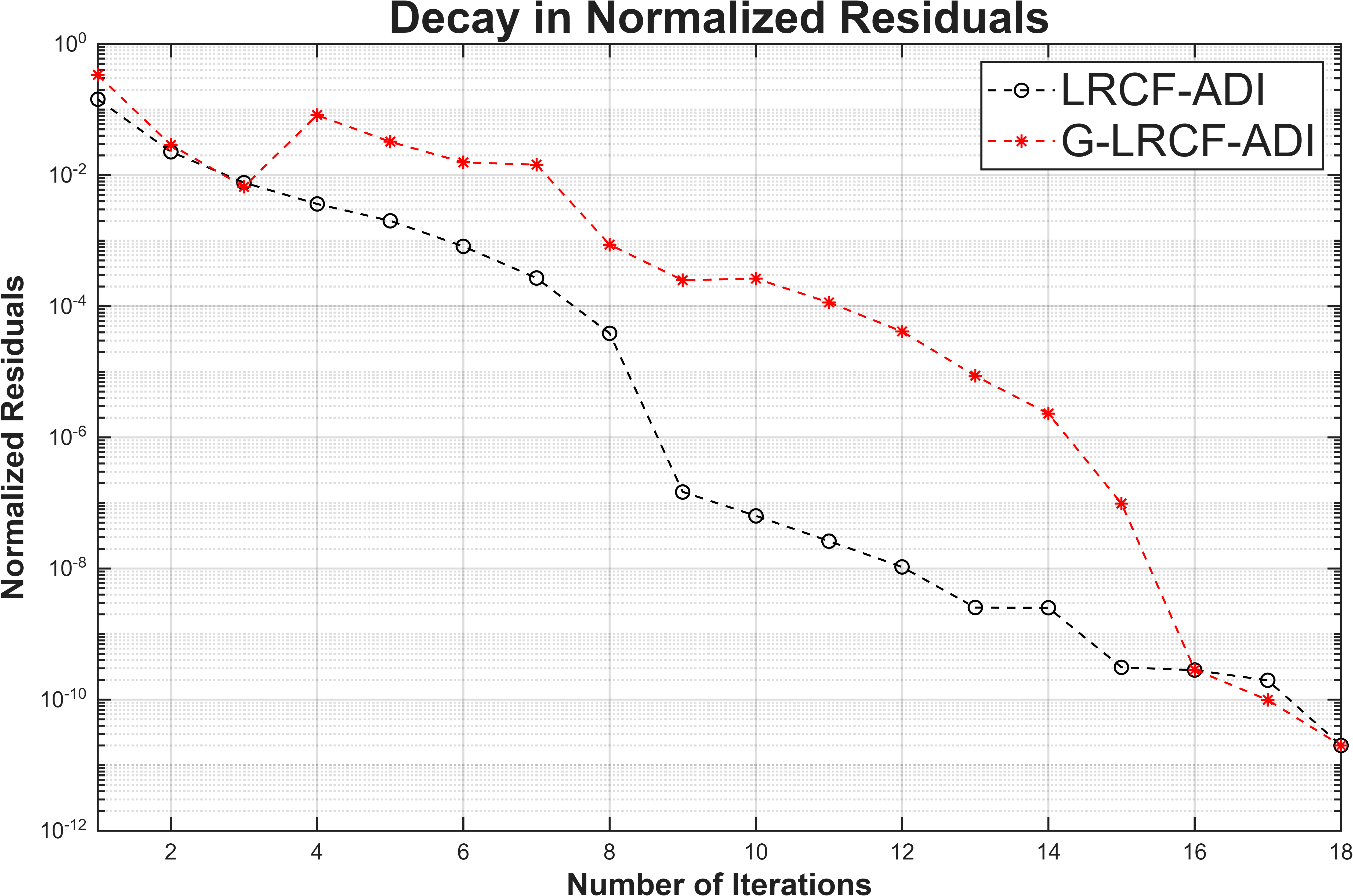}
  \caption{Decay of the normalized residual}
  \label{fig4}
\end{figure}
\subsection{Example 3: Frequency-Limited Gramians}

This example first considers a moderate-size problem for which the computation of $F_\Omega(E,A)E^{-1}B$ is feasible, allowing the approximation error to be evaluated. The problem size is then increased to demonstrate the computational efficiency of the G-LRCF-ADI method in a large-scale setting.

For the moderate-size example,
\[
E\in\mathbb{R}^{5000\times 5000},
\qquad
A\in\mathbb{R}^{5000\times 5000},
\qquad
B\in\mathbb{R}^{5000\times 2}.
\]
The parameters are set to $n_c=5$ and $n_d=3$, and the tolerance is $\tau=10^{-8}$. The frequency interval is
\[
\Omega=
\left(
-\frac{\pi}{10},
\frac{\pi}{10}
\right)
\ \text{rad/sec}.
\]
The G-LRCF-ADI method converges in $0.3220$ seconds, with
\[
\frac{\|G_\Omega^{(k)}-G_\Omega^{(k-1)}\|_F}
{\|G_\Omega^{(k-1)}\|_F}
=
2.3717\times10^{-10},
\qquad
\frac{
\|F_\Omega^{(k)}M_\Omega^{(k)}(F_\Omega^{(k)})^\top\|_F
}{
\|EG_\Omega^{(k)}B^\top+
B(G_\Omega^{(k)})^\top E^\top\|_F
}
=
0~(\text{below machine precision}).
\]
The shifts $\alpha_i=-e^{j\nu_i}$ are selected as $12$ points on the unit circle with $\nu_i\in\Omega$. The poles $\beta_i$ consist of $10$ poles $\zeta_i$ and $2$ poles $\eta_i$.

For comparison, computing $F_\Omega(E,A)E^{-1}B$ directly requires $92.9014$ seconds, even for a model of order $5000$. The relative approximation error and the relative residual are
\begin{align}
\frac{
\|F_\Omega(E,A)E^{-1}B-G_\Omega^{(k)}\|_F
}{
\|F_\Omega(E,A)E^{-1}B\|_F
}
&=
9.4004\times10^{-11},
\nonumber\\
\frac{
\|A\hat{P}_\Omega^{(k)}A^\top
-E\hat{P}_\Omega^{(k)}E^\top
+EF_\Omega(E,A)E^{-1}BB^\top
+B(F_\Omega(E,A)E^{-1}B)^\top E^\top\|_F
}{
\|EF_\Omega(E,A)E^{-1}BB^\top
+B(F_\Omega(E,A)E^{-1}B)^\top E^\top\|_F
}
&=
0~(\text{below machine precision}).\nonumber
\end{align}
Thus, G-LRCF-ADI produces an accurate approximation in a fraction of a second without directly computing $F_\Omega(E,A)E^{-1}B$.

Next, a large-scale example is considered with
\[
E\in\mathbb{R}^{10^6\times 10^6},
\qquad
A\in\mathbb{R}^{10^6\times 10^6},
\qquad
B\in\mathbb{R}^{10^6\times 2}.
\]
The shifts $\alpha_i=-e^{j\nu_i}$ are selected as $16$ points on the unit circle with $\nu_i\in\Omega$. The poles $\beta_i$ consist of $10$ poles $\zeta_i$ and $6$ poles $\eta_i$. The G-LRCF-ADI method converges in $5.9853$ seconds, with
\[
\frac{\|G_\Omega^{(k)}-G_\Omega^{(k-1)}\|_F}
{\|G_\Omega^{(k-1)}\|_F}
=
1.8076\times10^{-10},
\qquad
\frac{
\|F_\Omega^{(k)}M_\Omega^{(k)}(F_\Omega^{(k)})^\top\|_F
}{
\|EG_\Omega^{(k)}B^\top+
B(G_\Omega^{(k)})^\top E^\top\|_F
}
=
7.9158\times10^{-9}.
\]
These results show that G-LRCF-ADI computes an accurate frequency-limited Gramian approximation efficiently, including for large-scale models.
\subsection{Example 4: Data-Driven BT}

In this example, the state-space matrices have dimensions
\[
E\in\mathbb{R}^{200\times 200},
\qquad
A\in\mathbb{R}^{200\times 200},
\qquad
B\in\mathbb{R}^{200\times 3},
\qquad
C\in\mathbb{R}^{2\times 200}.
\]
The parameters are set to $n_c=25$ and $n_d=10$. The state-space realization is used only to generate the samples $G(e^{j\nu_i})$. After obtaining these samples, the state-space matrices are not accessed, and BT is performed non-intrusively.

Figure \ref{fig5} shows the shifts used in this example. The shifts $\alpha_i$ and $\hat{\alpha}_i$ lie on the unit circle, allowing the required frequency-response samples to be measured experimentally. The shifts $\beta_i$ and $\hat{\beta}_i$ lie inside the unit circle, as required by the G-LRCF-ADI method.

\begin{figure}[!h]
  \centering
  \includegraphics[width=10cm]{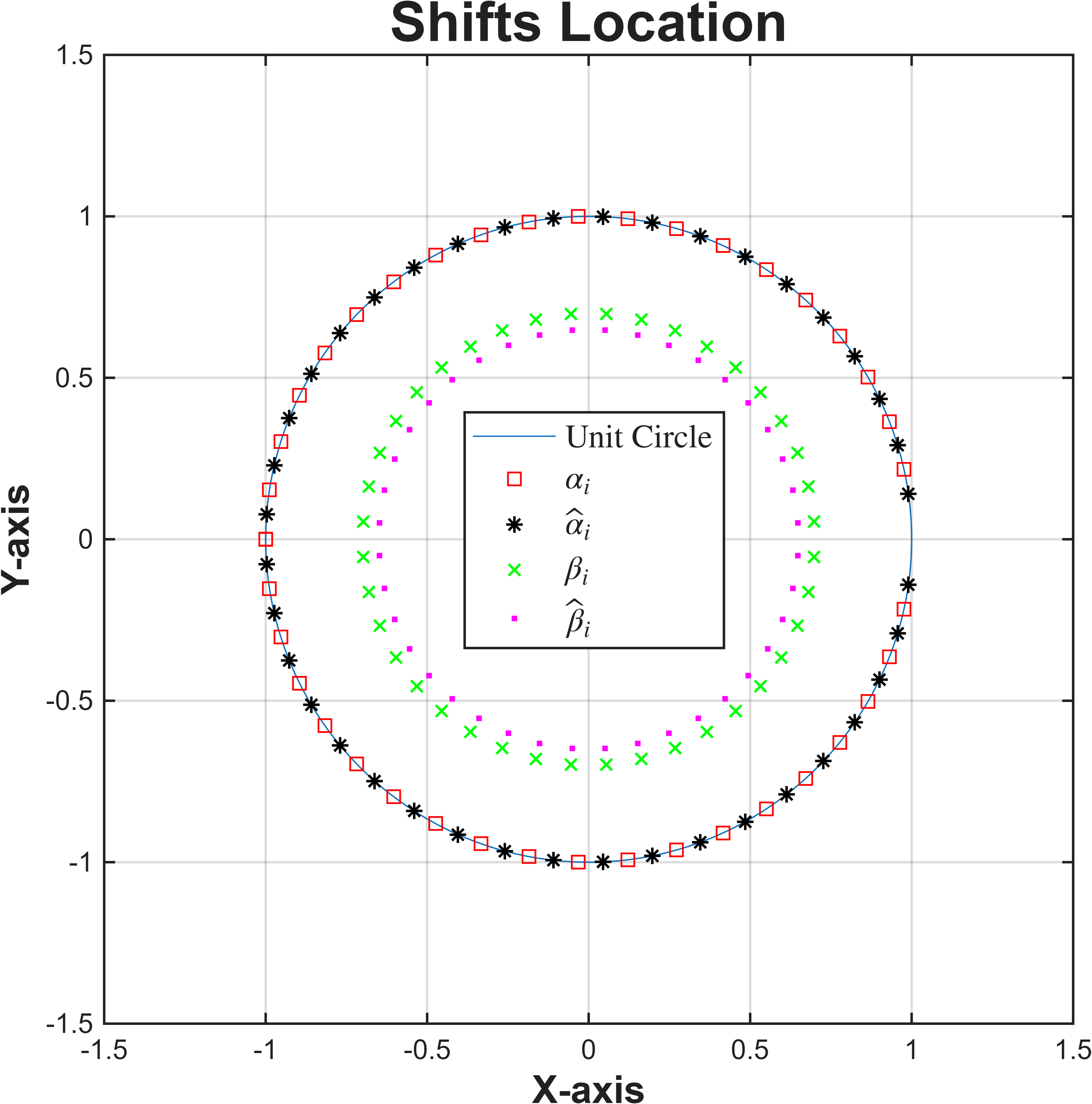}
  \caption{Shift locations}
  \label{fig5}
\end{figure}

A $20$th-order ROM is constructed using the G-LRCF-ADI-based data-driven BT method. Its Hankel singular values are compared with those of the ROM obtained by intrusive BT. As shown in Figure \ref{fig6}, the data-driven method captures all $20$ dominant Hankel singular values of the original model.

\begin{figure}[!h]
  \centering
  \includegraphics[width=10cm]{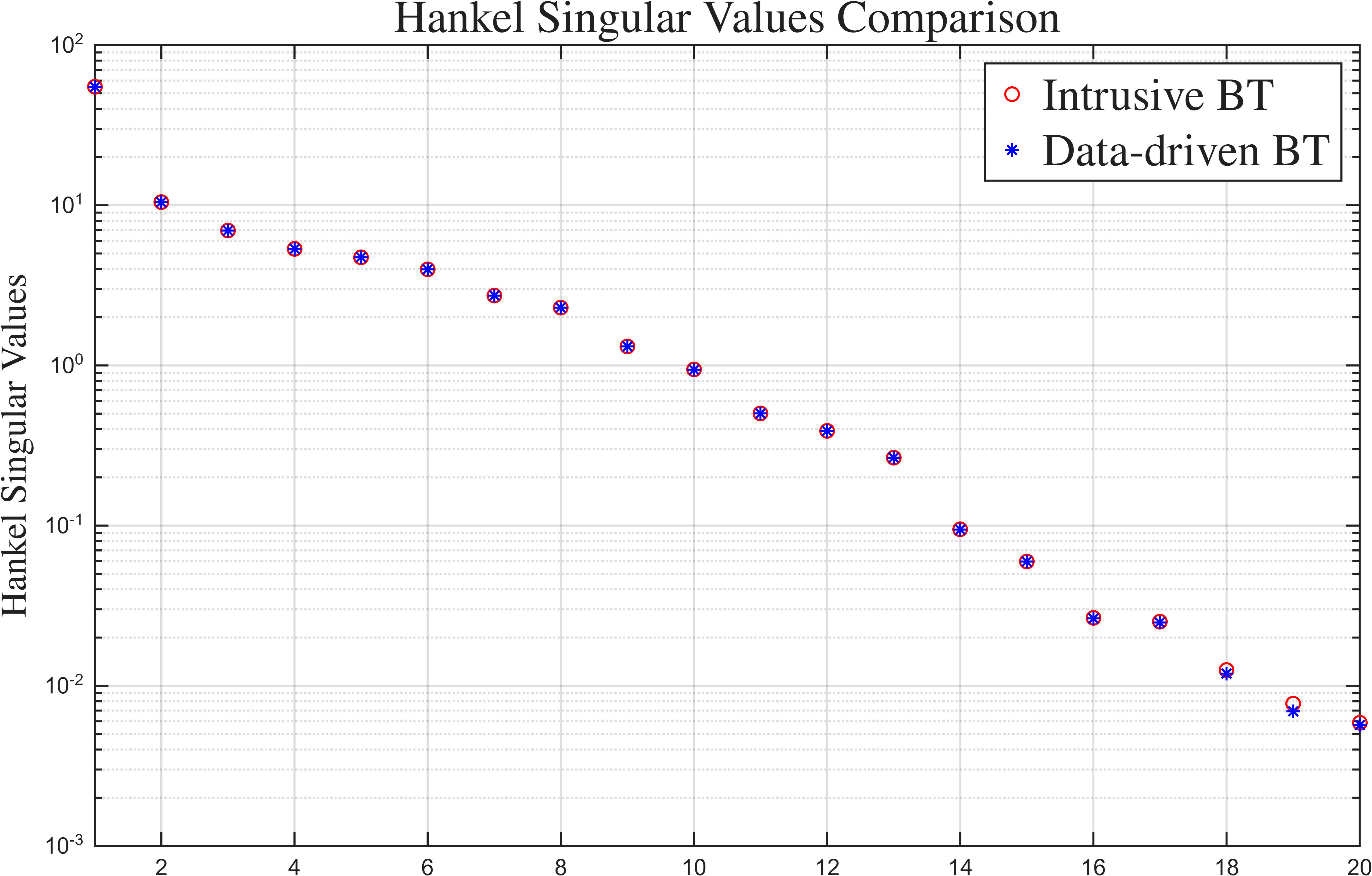}
  \caption{Comparison of Hankel singular values}
  \label{fig6}
\end{figure}
\subsection{Example 5: Data-Driven FLBT}

Consider the model from the previous example, with the frequency interval set to
\[
\Omega=
\left(
-\frac{\pi}{4},
\frac{\pi}{4}
\right)
\ \text{rad/sec}.
\]
Figure \ref{fig7} shows the shifts used in this example. The shifts $\alpha_i$ and $\hat{\alpha}_i$ lie on the unit circle on the opposite side of $\Omega$, since G-LRCF-ADI interpolates at the mirror images of the shifts. Consequently, the resulting projected model interpolates $G(z)$ over the frequency interval $\Omega$ for implicitly approximating the frequency-limited Gramians.

As in the previous example, the state-space matrices are used only to generate samples $G(e^{j\nu_i})$. After obtaining these samples, the state-space realization is not accessed, and FLBT is performed non-intrusively.

\begin{figure}[!h]
  \centering
  \includegraphics[width=10cm]{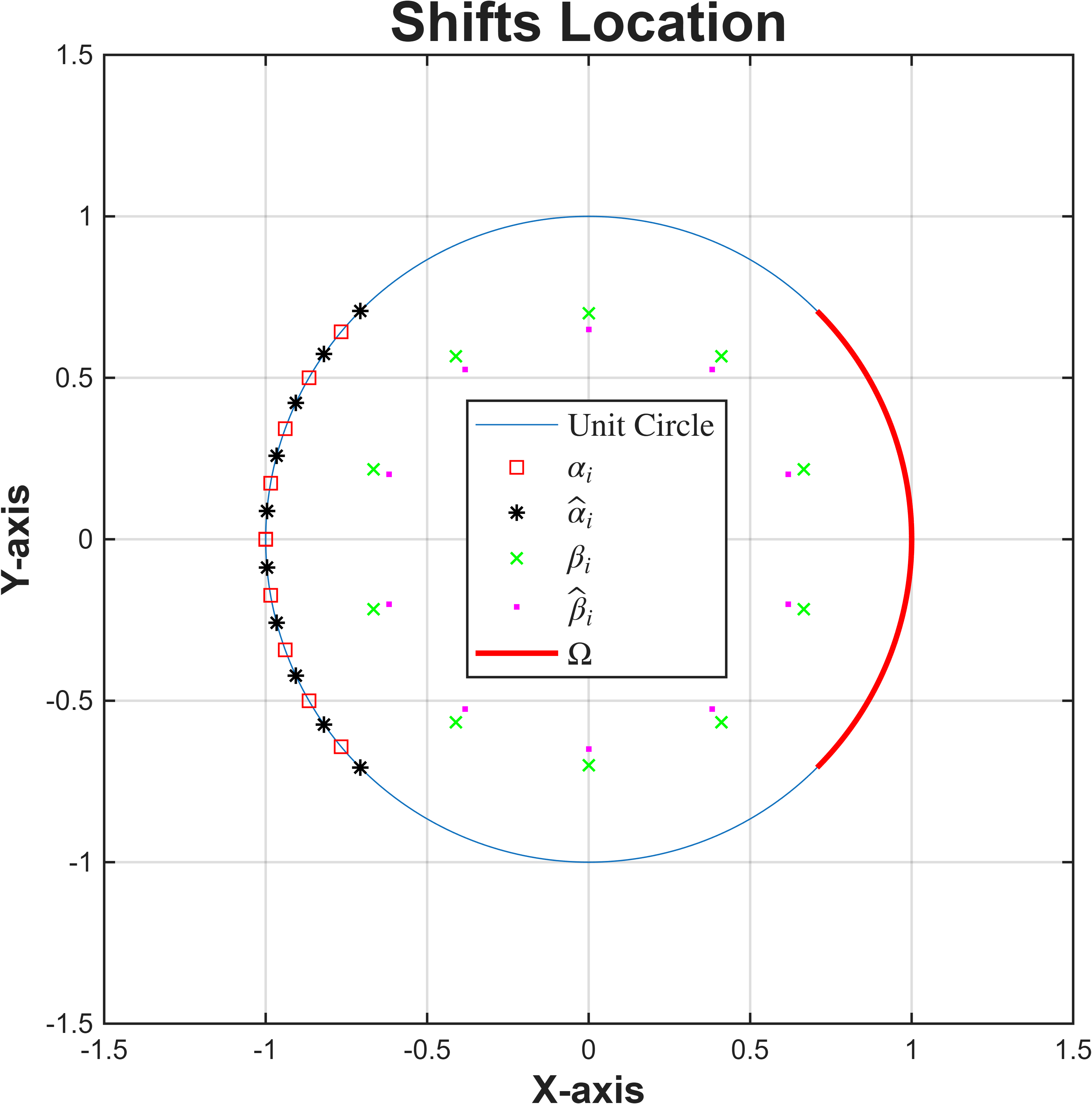}
  \caption{Shift locations}
  \label{fig7}
\end{figure}

A $20$th-order ROM is constructed using G-LRCF-ADI-based data-driven FLBT. Its frequency-limited Hankel singular values are compared with those of the ROM obtained using intrusive FLBT. As shown in Figure \ref{fig8}, the data-driven and intrusive FLBT methods produce nearly identical results.

\begin{figure}[!h]
  \centering
  \includegraphics[width=10cm]{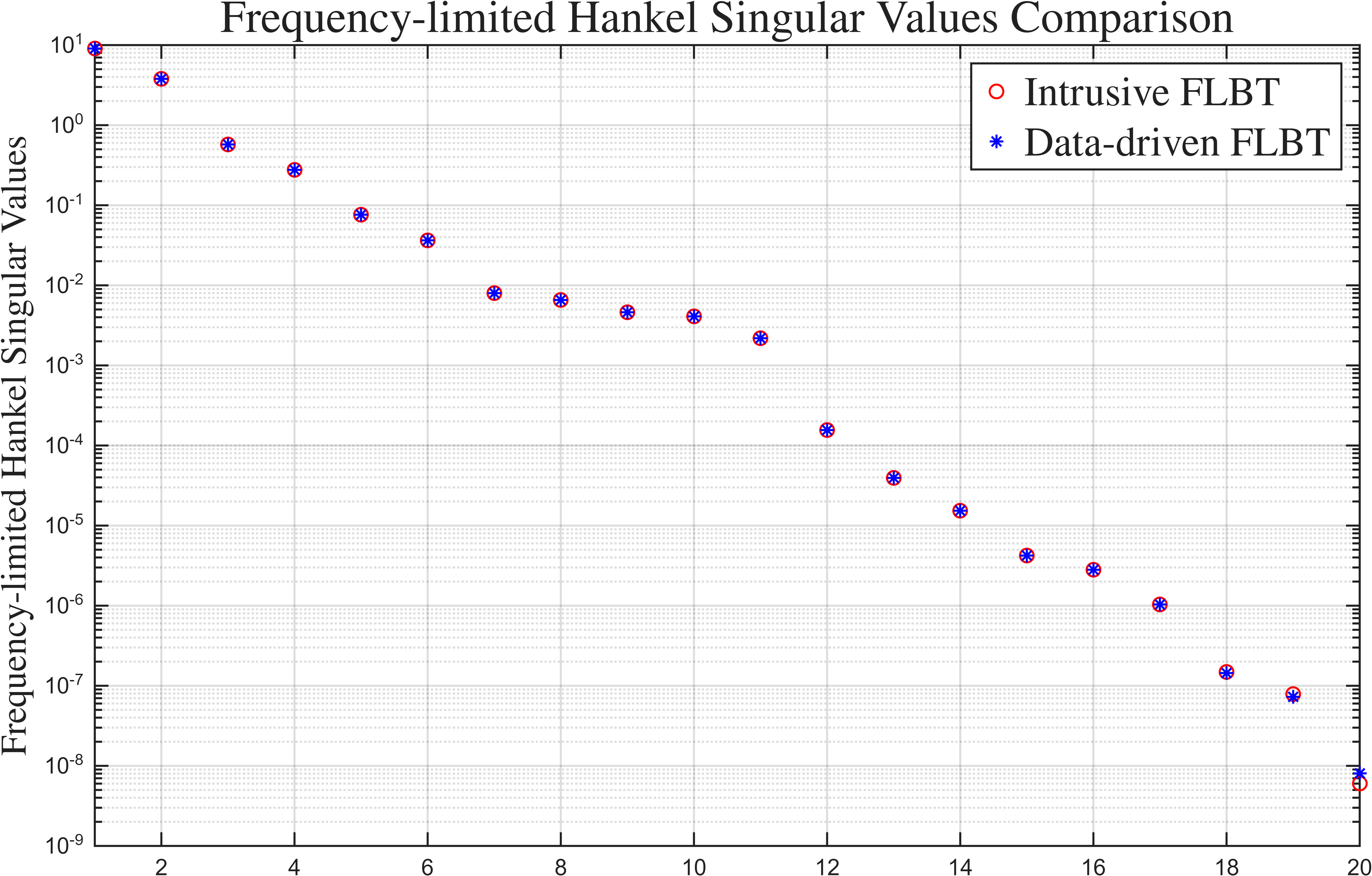}
  \caption{Comparison of frequency-limited Hankel singular values}
  \label{fig8}
\end{figure}
\section{Conclusion}

This paper considers low-rank solutions of large-scale Stein equations. The low-rank Cholesky factor ADI method for Stein equations is revisited, and previously unknown properties are established. In particular, the method is shown to implicitly perform $\mathcal{H}_2$-pseudo-optimal MOR, although the ROM matrices are not explicitly constructed. An automatic shift-generation method is also proposed. Numerical results show that it produces effective shifts and yields rapid residual decay without user intervention.

The low-rank Cholesky factor ADI method is further generalized to allow ADI shifts anywhere in the complex plane, rather than only outside the unit circle. The generalized method is applied to frequency-limited Stein equations. Furthermore, a data-driven balanced truncation method based on the generalized low-rank solver is proposed. It computes ROMs non-intrusively from measurable transfer function samples on the unit circle, without access to a state-space realization. Numerical experiments on large-scale state-space models demonstrate the effectiveness of the proposed methods as Stein equation solvers and data-driven BT methods.
\section*{Appendix}
\begin{proof}
We suppress the factors $\otimes I_m$ and prove the scalar block case. Write $\hat T^{(k)}=T^{(k)}\otimes I_m$. Set
\[
\delta_i=\alpha_i+\beta_i,
\qquad
a_i=\frac1{\beta_i}-\overline{\beta_i},
\qquad
D^{(k)}=Z^{(k)}(Z^{(k)})^*.
\]
Thus
\[
D^{(k)}=\operatorname{diag}(d_1,\dots,d_k),
\qquad
d_i=\frac{1-|\beta_i|^2}{\prod_{j=1}^i|\beta_j|^2}.
\]

After congruence by $X_{\mathrm{fadi}}^{(k)^{-1}}$, equation \eqref{proj_lyap} becomes
\[
A_kP_kA_k^*-P_k+\mathbf 1_k\mathbf 1_k^*=0,
\]
where
\[
(A_k)_{ij}=
\begin{cases}
\beta_i, & i=j,\\
\delta_j, & j<i,\\
0, & j>i.
\end{cases}
\]
It suffices to show that
\[
P_k=T^{(k)*}D^{(k)}T^{(k)}
\]
satisfies this equation.

For $k=1$,
\[
T^{(1)}=\frac1{a_1},
\qquad
d_1=\frac{1-|\beta_1|^2}{|\beta_1|^2},
\]
so
\[
T^{(1)*}D^{(1)}T^{(1)}
=
\frac1{1-|\beta_1|^2},
\]
which solves $\beta_1P_1\overline{\beta_1}-P_1+1=0$.

For $k=2$, write
\[
T^{(2)}=
\begin{bmatrix}
T^{(1)} & u\\
0 & \tau
\end{bmatrix},
\qquad
u=T^{(2)}_{12},
\quad
\tau=T^{(2)}_{22}.
\]
Since $T^{(1)*}d_1=1/\beta_1$, the upper-right block of the Stein equation gives
\[
\left(\frac1{\beta_1}-\overline{\beta_2}\right)u
=
1+\overline{\delta_1}\,T^{(1)}_{11},
\]
which is the recursion for $u$. The remaining $(2,2)$ residual is
\[
R_{22}
=
|\delta_1|^2P_{11}
+
\delta_1\overline{\beta_2}P_{12}
+
\beta_2\overline{\delta_1}P_{21}
+
|\beta_2|^2P_{22}
-
P_{22}
+
1.
\]
Substituting
\[
P_{12}=\frac{u}{\beta_1},
\qquad
P_{22}=d_1|u|^2+d_2|\tau|^2
\]
and using the preceding equation for $u$ yields
\[
R_{22}
=
\frac{|1-a_1u|^2-|a_2\tau|^2}{|\beta_1|^2}.
\]
The recursion for $\tau$ is $a_2\tau=1-a_1u$, hence $R_{22}=0$.

For $k=3$, write
\[
T^{(3)}=
\begin{bmatrix}
T^{(2)} & u\\
0 & \tau
\end{bmatrix},
\qquad
u=
\begin{bmatrix}
u_1\\u_2
\end{bmatrix}
=
\begin{bmatrix}
T^{(3)}_{13}\\T^{(3)}_{23}
\end{bmatrix},
\quad
\tau=T^{(3)}_{33}.
\]
Let $P_2=T^{(2)*}D^{(2)}T^{(2)}$. The upper-right block of the Stein equation is
\[
(I-\overline{\beta_3}A_2)T^{(2)*}D^{(2)}u
=
A_2P_2
\begin{bmatrix}
\overline{\delta_1}\\
\overline{\delta_2}
\end{bmatrix}
+
\mathbf 1_2.
\]
Substituting the explicit $2\times2$ matrices $A_2,T^{(2)},D^{(2)}$ and simplifying gives the equivalent triangular system
\[
\left(\frac1{\beta_1}-\overline{\beta_3}\right)u_1
=
1+\overline{\delta_1}\,T^{(2)}_{11}
+\overline{\delta_2}\,T^{(2)}_{12},
\]
\[
a_1u_1+
\left(\frac1{\beta_2}-\overline{\beta_3}\right)u_2
=
1+\overline{\delta_2}\,T^{(2)}_{22}.
\]
These are exactly the recursions for $[t_1^{(3)}]_1$ and $[t_1^{(3)}]_2$. With these two equations imposed, the $(3,3)$ residual reduces to
\[
R_{33}
=
\frac{
|1-a_1u_1-a_2u_2|^2
-
|a_3\tau|^2
}{
|\beta_1|^2|\beta_2|^2
}.
\]
The recursion for $\tau=t_2^{(3)}$ is
\[
a_3\tau=1-a_1u_1-a_2u_2,
\]
so $R_{33}=0$. Hence the claim holds for $k=3$.

The same block calculation gives the induction step. Assume the claim holds for $k-1$. The upper-right block of the $k$th Stein equation is equivalent, by forward substitution, to
\[
\left(\frac1{\beta_i}-\overline{\beta_k}\right)[t_1^{(k)}]_i
+
\sum_{r=1}^{i-1}
a_r[t_1^{(k)}]_r
=
1+
\sum_{r=i}^{k-1}
\overline{\delta_r}\,T^{(k-1)}_{ir},
\qquad i=1,\dots,k-1,
\]
which is the first recursion in the theorem. After these equations are satisfied, the final diagonal residual is
\[
R_{kk}
=
\frac{
\left|1-\sum_{r=1}^{k-1}a_r[t_1^{(k)}]_r\right|^2
-
|a_k t_2^{(k)}|^2
}{
\prod_{j=1}^{k-1}|\beta_j|^2
}.
\]
The second recursion in the theorem gives
\[
a_k t_2^{(k)}
=
1-\sum_{r=1}^{k-1}a_r[t_1^{(k)}]_r,
\]
so $R_{kk}=0$. Thus the Stein residual vanishes for all $k$.

Returning to the original coordinates,
\[
\tilde P^{(k)}
=
X_{\mathrm{fadi}}^{(k)}
T^{(k)*}D^{(k)}T^{(k)}
X_{\mathrm{fadi}}^{(k)*}
=
X_{\mathrm{fadi}}^{(k)}
(\hat T^{(k)})^*
Z^{(k)}(Z^{(k)})^*
\hat T^{(k)}
X_{\mathrm{fadi}}^{(k)*}
=
\tilde Z^{(k)}(\tilde Z^{(k)})^*.
\]
\end{proof}

\end{document}